\numberwithin{equation}{section} 
\title{$C^{\s+\a}$ estimates for concave, non-local parabolic equations with critical drift}
\author[H. Chang Lara]{H\'ector Chang Lara}
\address{%
Columbia University\\
Department of Mathematics\\
New York, NY 10027
}
\email{changlara@math.columbia.edu}
\author[G. D\'avila]{Gonzalo D\'avila}
\address{%
University of British Columbia\\
Department of Mathematics\\
1984 Mathematics Road\\
Vancouver, B.C. Canada V6T 1Z2 
}
\email{gdavila@math.ubc.ca}
\begin{document}
\begin{abstract}
Given a concave integro-differential operator $I$, we study regularity for solutions of fully nonlinear, nonlocal, parabolic, concave equations of the form $u_t-Iu=0$. The kernels are assumed to be smooth but non necessarily symmetric which accounts for a critical non-local drift. We prove a $C^{\s+\a}$ estimate in the spatial variable and a $C^{1,\a/\s}$ estimates in time assuming time regularity for the boundary data. The estimates are uniform in the order of the operator $I$, hence allowing us to extend the classical Evans-Krylov result for concave parabolic equations.
\end{abstract}

\maketitle


\section{Introduction}
In this work we are interested in studying regularity of solutions of 
\begin{align}\label{eqintro}
u_t- \inf_{L \in \cL} Lu=0 \text{ in $B_1\times(-1,0]$},
\end{align}
where,
\begin{align*}
Lu(x) &:= (2-\s)\int \d u(x;y)\frac{K(y)}{|y|^{n+\s}}dy + b\cdot Du(x),\\
\d u(x;y) &:= u(x+y) - u(x) - Du(x)\cdot y\chi_{B_1}(y).
\end{align*}
The kernel $(2-\s)\frac{K(y)}{|y|^{n+\s}}$ is comparable to the fractional laplacian of order $\s \in [1,2)$ and it is non necessarily symmetric. As it was discussed in a previous paper \cite{CD3}, the odd part of the kernel brings drift terms after rescaling the equation. That is the reason why the drift term $b\cdot Du$ is included above. Contrasting to the second order case, where the lower order drift might be absorbed by estimates proven for pure second order equation at sufficiently small scales, in our case the drift remains comparable to the diffusion as the scales go to zero, making it critical.

This type of equations appear naturally when studying stochastic control problems (see \cite{Soner}), ergodic control problem (see \cite{MR}) and economic applications (see \cite{DP}), in which the the random part is given by a purely jump process, which is most of the time non necessarily symetric. The particular concave case can be seen as a one-player stochastic game, which at each step he can choose a strategy to minimize the expected value of some fixed function evaluated at the first exit point of a given domain. 
 
In the local case ($\s=2$) this problem was first studied independently by L. Evans and N. Krylov (see \cite{Evans}, \cite{Krylov} and also the recent proof by L. Caffarelli and L. Silvestre in \cite{C4}). They obtain $C^{2,\a}$ a priori estimates and therefore the existence of classical solutions by the continuity method.

L. Caffarelli and L. Silvestre proved in \cite{C3} that solutions of the elliptic problem $Iu=0$, where $I$ is a concave operator with smooth kernels, are $C^{\s+\a}$. It relies on the theory of viscosity solutions developed in \cite{C1} and \cite{C2}. The regularity obtained is enough to evaluate the operator in the classical sense. Moreover, the estimates obtained are independent of the order of the equation and extends the theory to the classical case.

A recent improvement of the previous work, done by J. Serra \cite{Serra14-2}, allowed to remove the smoothness condition for symmetric kernels in order to prove $C^{\s+\a}$ estimates. It proceeds by a compactness argument that blows-up the solution, reducing the problem to a Lioville type of result.

Regularity for parabolic nonlocal equations has been studied by the authors in \cite{CD}, \cite{CD2} and \cite{CD3} in which H\"older estimates are proven for general equations like \eqref{eqintro} with a non zero right hand side. Recent advances include the work of J. Serra \cite{Serra14} for $C^{1,\a}$ estimates with rough kernels; and the work of T. Jin, and J. Xiong, \cite{Jin14} for higher order, optimal Schauder estimates in the linear case.

We extend the ideas of \cite{C3} to the parabolic nonlocal case to prove the desired $C^{\s+\a}$ interior regularity. The order $\s$ is assumed at least one in order for the drift to be at most comparable with the diffusion. On the other hand, for $\s\in(0,1]$, the $C^{1,\a}$ estimates established in \cite{CD3} already give classical solutions. In contrast with the classical theory, where drift terms might be absorbed by estimates proven for pure second order equations at sufficiently small scales, our operator keeps the drift comparable to the diffusion as the scale go to zero providing new challenges.

We assume the boundary data to be at least differentiable in time. This is way to ensure that the solution is $C^{1,\a}$ in time in the interior. Keep in mind that for general boundary data one cannot expect $C^{1,\a}$ regularity in time, even for the fractional heat equation, an example is discussed in \cite{CD}. Whenever a weaker condition on the boundary data implies $C^{1,\a}$ regularity in time in the interior remains an open question.


Here is our main Theorem.

\begin{theorem}\label{thmintro}
Let $\s \in [1,2)$, $\cL \ss \cL_2^\s(\l,\L,\b)$ (sufficently smooth kernels to be defined) and $u$ satisfies in the viscosity sense,
\begin{align*}
u_t - \inf_{L \in \cL}Lu = 0 \text{ in $B_1\times(-1,0]$}.
\end{align*}
Then there is some $\a \in (0,1)$ and $C>0$, depending only on $n, \l, \L$ and $\b$ such that,
\begin{align*}
\|u\|_{C^{\s+\a}(B_{1/2}\times(-1,0])} \leq C(\|u\|_{L^1((-1,0]\mapsto L^1(\w_\s))}+[u\chi_{B_1^c}]_{C^{0,1}((-1,0]\mapsto L^1(\w_\s))})).
\end{align*}
\end{theorem}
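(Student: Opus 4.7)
The strategy is to transplant the nonlocal elliptic Evans--Krylov argument of Caffarelli--Silvestre \cite{C3} to the parabolic setting with critical drift, using the $C^{1,\a}$ estimates and the weak Harnack inequalities for extremal equations from \cite{CD3} as the base-level machinery.

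I would first establish the $C^{1,\a/\s}$ time regularity. Let $v_h(x,t) := h^{-1}(u(x,t+h) - u(x,t))$. Because the equation is translation invariant in $t$ and $I$ is concave, one-sided linearization at an (almost) infimum-attaining $L^\ast \in \cL$ gives the two-sided control $M^-_\cL v_h \leq (v_h)_t \leq M^+_\cL v_h$ in the interior, with a right-hand side controlled by the assumed Lipschitz time-dependence of the exterior data. Applying the parabolic H\"older estimate of \cite{CD3} to $v_h$ and sending $h \downarrow 0$ yields the desired time regularity of $u$.

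The main step is the spatial $C^{\s+\a}$ estimate. For a unit vector $e$ and a small step $h$, set $w_h(x,t) := u(x+he,t) + u(x-he,t) - 2u(x,t)$. Choosing $L^\ast \in \cL$ to be a near-minimizer of $Lu(x,t)$ at a given point and using $Iu_{\pm he} \leq L^\ast u_{\pm he}$ together with the equation $u_t = Iu$, one obtains $(w_h)_t - L^\ast w_h \leq 0$; hence $w_h$ is a subsolution of the extremal equation $v_t - M^+_\cL v = 0$. To convert this into a H\"older estimate on $w_h$ itself, I would follow the classical Evans--Krylov covering device: combine the subsolution bound above, for each direction $e$, with the parabolic weak Harnack / point estimate of \cite{CD3} applied to auxiliary non-negative supersolutions of the form $\sup_{|e'|=|e|} w_{e',h} - w_{e,h}$. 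Running this at dyadic scales in $h$ yields a uniform H\"older modulus of continuity $C|h|^{\s+\a}$ for $w_h$, which is exactly $C^{\s+\a}$ regularity in $x$.

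The main obstacle, as in all the authors' previous work on this problem, is the critical drift. Unlike the second-order local case where a lower-order drift can be absorbed at sufficiently small scales, here the drift stays comparable to the nonlocal diffusion under every rescaling. Consequently, all the steps must be performed uniformly inside the scale-invariant class $\cL_2^\s(\l,\L,\b)$; the smoothness parameter $\b$ on the kernels is precisely what keeps the rescaled linearization $L^\ast$ in this class, and the weighted norms $\|\cdot\|_{L^1(\w_\s)}$ appearing on the right-hand side of the theorem are needed to control the far-field contribution of the nonlocal operator acting on $w_h$ and $v_h$, neither of which is globally bounded.
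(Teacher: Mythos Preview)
Your time-regularity step via incremental quotients is fine and matches Theorem~\ref{furthertime}. The spatial step, however, has a genuine gap. You propose to run the \emph{local} Evans--Krylov covering on directional second differences $w_h(x,t)=u(x+he,t)+u(x-he,t)-2u(x,t)$. The subsolution claim $(w_h)_t-\cM^+_\cL w_h\le 0$ is correct, but your ``auxiliary non-negative supersolution'' $\sup_{e'} w_{e',h}-w_{e,h}$ is not a supersolution of anything: a supremum of subsolutions is again a subsolution, and subtracting another subsolution does not produce a supersolution. In the local theory the supersolution structure for $M-u_{ee}$ comes from the equation $F(D^2u)=u_t$, which couples all eigenvalues of $D^2u$ at a point; here the equation $\inf_L Lu=u_t$ couples the integrals $\int \d u(x;y)K^\s(y)\,dy$ over all $y$, not the pointwise quantities $\d u(x;he)$ for individual directions $e$. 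So the covering device does not transfer as written.

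What the paper does (following \cite{C3}) is replace unit directions $e$ by measurable sets $A\subset B_1$ and work with
\[
w_A(x)=\varphi(x)\int_A\bigl(\d u(x;y)-\d u(0;y)\bigr)\frac{(2-\s)}{|y|^{n+\s}}\,dy,\qquad P=\sup_A w_A,\quad N=\sup_A(-w_A).
\]
Each $w_A$ is a subsolution of $\cM^+_\cL$ (Corollaries~\ref{cor:convolution}--\ref{cor:convolution3}; the $\cL_2$ smoothness is used here to control the far-field commutator). The non-local substitute for the eigenvalue relation is Lemma~\ref{lema91}: evaluating the extremal equations for $u(\cdot+x)-u$ at the origin yields $\frac{\l}{\L}N-C|x|^\a\le P\le\frac{\L}{\l}N+C|x|^\a$. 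This comparability, combined with the Point Estimate applied to a near-extremal $w_A$ and the Oscillation Lemma applied to the complementary $w_{B_1\setminus A}$, gives the diminish-of-oscillation Lemma~\ref{Ca for P} for $P$, hence H\"older continuity of $(-\D)^{\s/2}u$. Your outline also omits the preliminary uniform bound $\|L_{K,b}^\s u\|_{L^\infty}\le C$ (Lemma~\ref{lem:bound_laplacian}), obtained via integration by parts and an $L^2$ argument, without which the $w_A$ are not even known to be bounded.
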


The paper is divided as follows. In Section \ref{VSP} we introduce the family of operators we are considering, the notion of viscosity solution and recall some properties. We also state some previous results that we need for the rest of this work. 
We use the concavity of the non-linearity in Section \ref{EQLU} to determine an equation for the average of a given solution, in particular we get an equation for the fractional laplacian. In Section \ref{EstimateforLap} we use the previous equation to obtain a weak $C^\s$ estimate on the laplacian of the solution. Finally in Section \ref{Further} we prove a diminish of oscillation lemma for the fractional laplacian which implies our main theorem.


\section{Preliminaries and Viscosity Solutions}\label{VSP}

The cylinder of radius $r$, height $\t$ and center $(x,t)$ in $\R^n\times\R$ is denoted by $C_{r,\t}(x,t) := B_r(x)\times(t-\t,t]$. Whenever we omit the center we are assuming that they get centered at the origin in space and time.

Given the scaling properties of linear operators with non symmetric kernels discussed in \cite{Chang12} and \cite{CD3}, it is reasonable enlarge the family of linear operators to include (classical) drift terms. In this sense lets introduce the following notation where the time variable has been omitted as it is irrelevant for the computation:
\begin{align*}
L_{K,b}^\s u(x) &:= (2-\s)\int\d u(x;y)\frac{K(y)}{|y|^{n+\s}}dy + b\cdot Du(x),\\
\d u(x;y) &:=  u(x+y) - u(x) - Du(x)\cdot y\chi_{B_1}(y).
\end{align*}
Initially we may consider kernels bounded from above and away from zero:
\begin{align*}
0 < \l \leq K \leq \L < \8.
\end{align*}
The drift comes not only from the term $b\cdot D$ but also from the odd part of the kernel after rescaling. We assume that they are controlled in the following way:
\begin{align*}
\sup_{r \in (0,1)} \left|b+(2-\s)\int_{B_1\sm B_r} \frac{yK(y)}{|y|^{n+\s}}dy\right| \leq \b.
\end{align*}
We denote by $\cL_0^\s(\l,\L,\b)$ the family of all linear operators with the two conditions given above and suppress some its parameters in the notation to follow whenever they are clear from the context, usually we write just $\cL_0$. Sufficient regularity/integrability to evaluate $L_{K,b}^\s u(x)$ is $u \in C^{1,1}(x) \cap L^1(\w_\s)$ where $\w_\s(y) = \min(1,|y|^{-(n+\s)})$.

More regular kernels can be considered in order to prove higher regularity estimates. This corresponds to the initial approach taken in \cite{C1}, \cite{C2} and \cite{C3} in order to use integration by parts techniques to control rough oscillations of the boundary data of the solution. This work follows uses the same technique for which we define the family $\cL_1^\s(\l,\L,\b) \ss \cL_0$ such that for each kernel,
\begin{align*}
|DK(y)| \leq \L|y|^{-1}.
\end{align*}
Moreover, let $\cL_2^\s(\l,\L,\b) \ss \cL_1$ such that for each kernel,
\begin{align*}
|D^2K(y)| \leq \L|y|^{-2}.
\end{align*}
Lets remind that the smoothness hypothesis of the previous works have been lifted in \cite{Serra14} and \cite{Serra14-2} for symmetric kernels. Their techniques applies also if drifts or lower order terms are included because of scaling considerations. In the present case however, an odd kernel renews the drift a may keep it comparable to the diffusion even as the scales go to zero, so our result is not clearly contained in such work. 

Given $\cL\in\cL_0$, a non linearity $I$ is given by a function $I:\W\times(t_1,t_2]\times\R^\cL\to \R$ such that,
\begin{align*}
Iu(x,t) := I(x,t,(Lu(x,t))_{L\in\cL}).
\end{align*}
$I$ is considered to be elliptic if it is increasing in $\R^\cL$.

The nonlinearity in our main Theorem is constructed from $\cL \ss \cL_2$ such that,
\begin{align*}
Iu = \cM^-_\cL u := \inf_{L \in \cL} Lu.
\end{align*}
It satisfies the following uniform ellipticity relation with the extremal ope-rators,
\begin{align*}
\cM^-_\cL (u-v) &\leq Iu - Iv \leq \cM^+_\cL(u-v).
\end{align*}
where $\cM^+_{\cL}:= \sup_{L\in\cL}L$.

\subsection{Viscosity solutions}

We recall some definitions pertaining to viscosity solutions $u$ for the equation $u_t-Iu=f$. A test function $\varphi$ needs to be sufficiently smooth/integrable about the contact point where the equation is tested. Moreover, qualitative properties as the continuity of $Iu$, require for the tail of $u$ to be at least continuous in time in the following integrable sense.

\begin{definition}
The space $C((t_1,t_2] \mapsto L^1(\w_\s))$ consists of all measurable functions $u:\R^n\times(t_1,t_2] \to \R$ such that for every $t \in (t_1,t_2]$,
\begin{enumerate}
\item $\|u(\cdot,t)^-\|_{L^1(\w_\s)} < \8.$
\item $\lim_{\t\nearrow0}\|u(\cdot,t) - u(\cdot,t-\t)\|_{L^1(\w_\s)} = 0$.
\end{enumerate}
\end{definition}

\begin{definition}[Test functions]\label{def:test_function}
A test function is defined as a pair $(\varphi, C_{r,\t}(x,t))$, such that $\varphi \in C^{1,1}_xC^1_t(C_{r,\t}(x,t)) \cap C((t-\t,t]\mapsto L^1(\w_\s))$.
\end{definition}

Whenever the cylinder in the Definition \ref{def:test_function} becomes irrelevant we will refer to the test function $(\varphi, C_{r,\t}(x,t))$ just by $\varphi$.

\begin{definition}
Given a function $u$ and a test function $\varphi$, we say that $\varphi$ touches $u$ from below at $(x,t)$ if,
\begin{enumerate}
\item $\varphi(x,t)=u(x,t)$,
\item $\varphi(y,s) \leq u(y,s)$ for $(y,s)\in \R^n\times(t-\t,t]$.
\end{enumerate}
\end{definition}

A similar definition for contact from above will be considered too.

\begin{definition}[Viscosity (super) solutions]\label{viscosity}
Given an elliptic operator $I$ and a function $f$, a function $u \in C(\W\times(t_1,t_2]) \cap C((t_1,t_2]\mapsto L^1(\w_\s))$ is said to be a viscosity super solution to $u_t - Iu \geq f$ in $\W\times(t_1,t_2]$, if for every lower continuous test function $(\varphi,C_{r,\t}(x,t))$ touching $u$ from below at $(x,t) \in \W\times(t_1,t_2]$, we have that $\varphi_{t^-}(x,t) - I\varphi(x,t) \geq f(x,t)$.
\end{definition}

Recall that $\varphi_{t^-}$ denotes the left time derivative of $\varphi$ natural for time evolution problems.

The definition of $u$ being a viscosity \textit{sub} solution to $u_t - Iu \leq f$ in $\W\times(t_1,t_2]$ is done similarly to the definition of super solution replacing contact from below by contact from above and reversing the last inequality. A viscosity solution to $u_t - Iu = f$ in $\W\times(t_1,t_2]$ is a function which is a super and a sub solution simultaneously.


\subsection{Previous Results}

Several qualitative results for viscosity solutions of our parabolic equations such as the stability, a comparison principle and the existence of (viscosity) solutions have been stablished in \cite{CD}, \cite{CD2}, \cite{CD3}. We recall at this point some quantitative estimates for the solutions which will be used in this work.

\begin{theorem}[Point Estimate]\label{PE}
Let $\s \in [1,2)$. Suppose $u \geq 0$ satisfies 
\begin{align*}
u_t - \cM^-_{\cL_0}u &\geq -f(t) \text{ in $C_{2r,2r^\s}(0,r^\s)$}.
\end{align*}
Then, for every $s \geq 0$, 
\begin{align*}
\frac{|\{u > s\} \cap C_{r,r^\s}|}{|C_{r,r^\s}|} \leq C\1\inf_{C_{r,r^\s}(0,r^\s)}u + r^\s\fint_{-r^\s}^{r^\s}f^+(s)ds\2^{\e}s^{-\e},
\end{align*}
for some constants $\e$ and $C$ depending only on $n, \l, \L$ and $\b$.
\end{theorem}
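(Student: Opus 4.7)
The statement is a nonlocal parabolic analogue of the classical Krylov--Safonov weak $L^\varepsilon$ estimate (a.k.a.\ point estimate), so I would follow the three-step Krylov--Safonov scheme adapted to the integro-differential setting with critical drift. The three ingredients are: an Aleksandrov--Bakelman--Pucci (ABP) type estimate on contact sets, a special sub-solution (``bump'') built from a barrier, and a dyadic cube decomposition in the parabolic metric.

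\textbf{Step 1 (nonlocal parabolic ABP).} By scaling and translating it suffices to treat $r=1$. Set $m := \inf_{C_{1,1}(0,1)} u$ and $F := \fint_{-1}^{1} f^+(s)\,ds$. I would first establish that the ``contact set'' of $u$ with its parabolic convex envelope, restricted to a suitable larger cylinder, has measure controlled from below by $(m+F)^n$, essentially via a Jensen / covering argument. The key point in the nonlocal regime is that the ABP bound must use the positive part of $\cM^+_{\cL_0}$ applied to the envelope (not just second derivatives), because even at contact points the nonlocal operator sees the full function; the drift $b\cdot Du$ is absorbed here using the assumption $\sup_r |b+(2-\s)\int_{B_1\setminus B_r}yK/|y|^{n+\s}|\leq\beta$, which gives a uniform-in-$\s$ bound on $|b|$ at the scale $|Du|\lesssim m+F$.

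\textbf{Step 2 (bump construction).} Build a smooth function $\Psi$ on $\R^n\times(-1,1]$ such that $\Psi\geq 1$ on $\overline{C}_{1,1}$, $\Psi\leq 0$ outside a larger cylinder $C_{R,R^\s}$, and $\Psi_t - \cM^-_{\cL_0}\Psi \leq \eta\,\chi_{C_{1/4,(1/4)^\s}}$ in $C_{R,R^\s}$ for some small $\eta>0$. Applied to $v := \Psi - \text{(normalized $u$)}$ together with Step 1, this yields the key estimate: if $\inf_{C_{1,1}(0,1)} u + F \leq 1$, then $|\{u\leq M\}\cap C_{1/4,(1/4)^\s}| \geq \mu$ for some universal $M,\mu$. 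Uniformity as $\s\to 2^-$ is preserved by the $(2-\s)$ normalization of the kernel and by controlling the tail of $\Psi$ in $L^1(\omega_\s)$.

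\textbf{Step 3 (parabolic Calder\'on--Zygmund iteration).} The usual trick is to iterate Step 2 on a parabolic dyadic decomposition of $C_{1,1}$ with ``stacked'' predecessors to account for the non-isotropic time scaling $\tau = r^\s$. Each bad cube is covered by a stack of predecessor cubes in which the hypothesis can be re-scaled and Step 2 reapplied; summing the geometric decay of measures yields $|\{u>s\}\cap C_{1,1}|\leq C s^{-\varepsilon}$ for $s\geq 1$, with $\varepsilon = \varepsilon(n,\lambda,\Lambda,\beta)$. Undoing the normalization $u \mapsto u/(\inf u + F)$ gives the stated bound.

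\textbf{Main obstacle.} The delicate part is Step 2 together with the uniformity in $\s\in[1,2)$: the barrier $\Psi$ must be a pointwise sub-solution of the extremal equation $\Psi_t - \cM^-_{\cL_0}\Psi \leq 0$ in a large annular cylinder uniformly as $\s\to 2^-$, while the non-symmetric kernels produce a drift that is critical and cannot be absorbed at small scales. I would handle this by choosing $\Psi(x,t) = \phi(|x|^2 + \gamma t)$ with a radial profile $\phi$ decaying like a high negative power of $|x|$, compute $\cM^-_{\cL_0}\Psi$ by splitting the integral into near and far regions, and use precisely the normalization $\sup_r |b+(2-\s)\int_{B_1\setminus B_r}yK/|y|^{n+\s}|\leq \beta$ to cancel the drift-like contribution against the odd-kernel part --- this cancellation is what makes the argument uniform in $\s$ and is the delicate ingredient absent from the symmetric case. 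Step 3 is then a routine adaptation of the parabolic stacked-cubes argument once Step 2 is in hand.
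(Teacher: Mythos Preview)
The paper does not prove this theorem: it is stated in Section~\ref{VSP} under ``Previous Results'' and is quoted from the authors' earlier work \cite{CD,CD2,CD3}, where the point estimate for nonlocal parabolic equations with critical drift is established. So there is no ``paper's own proof'' to compare against here.

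That said, your outline is the correct one and matches the strategy carried out in those references: a nonlocal parabolic ABP-type lemma on the contact set, a barrier/bump subsolution, and a stacked parabolic Calder\'on--Zygmund iteration, with the critical drift absorbed via the normalization $\sup_{r\in(0,1)}\big|b+(2-\s)\int_{B_1\setminus B_r} yK(y)|y|^{-(n+\s)}dy\big|\leq\b$. Your identification of the barrier construction as the delicate step --- and of the cancellation between the explicit drift $b$ and the odd part of the kernel as the mechanism that makes the estimate uniform in $\s\in[1,2)$ --- is exactly the point of \cite{CD3}. One small correction: in Step~1 you write that the contact-set measure is bounded \emph{below} by $(m+F)^n$; the ABP inequality goes the other way (the infimum is bounded above by an integral over the contact set), and what one actually extracts is a lower bound on the measure of the set where $u$ is small, which is the content of your Step~2. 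Also, in the statement the right-hand side $f$ depends only on time, which simplifies the ABP step relative to the general $f(x,t)$ version --- you may want to exploit this rather than set up the full ABP machinery.
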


The Oscillation Lemma provided in \cite{CD3} controls the point-wise size of a non negative sub solution in terms of an integral norm.


\begin{lemma}[Oscillation Lemma]\label{lem:oscillation}
Let $\cL\ss \cL_0$, $I:\W\times(t_1,t_2]\times\R^\cL\to\R$ uniformly elliptic and such that $I0=0$. Let $u$ satisfies,
\begin{align*}
u_t - I u &\leq f \text{ in $\W\times(t_1,t_2]$}.
\end{align*}
Then for every $\W'\times(t_1',t_2] \cc \W\times(t_1,t_2]$,
\begin{align*} 
\sup_{\W'\times(t_1',t_2]} u^+ \leq C\1\|u^+\|_{L^1((t_1,t_2] \mapsto L^1(\w_\s))} + \|f^+\|_{L^1((t_1,t_2]\mapsto L^\8(\W))}\2,
\end{align*}
for some universal $C>0$, independent of $\s \in [1,2)$, depending on the domains.
\end{lemma}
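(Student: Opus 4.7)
The plan is to combine the Point Estimate (Theorem~\ref{PE}) with a parabolic scaling reduction and a contradiction argument, in the spirit of the local $L^\8$ bounds for subsolutions of non-local equations. By space--time translation and the parabolic rescaling $(x,t)\mapsto(rx,r^\s t)$, which preserves the family $\cL_0$, it suffices to prove the estimate in a fixed reference geometry, say $\Omega\times(t_1,t_2]=B_2\times(-2,0]$ and $\Omega'\times(t_1',t_2]=B_1\times(-1,0]$, after normalizing so that the right-hand side of the claimed inequality is at most $1$.

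The key algebraic step is to convert the subsolution inequality into a supersolution one. Uniform ellipticity together with $I0=0$ gives $u_t-\cM^+_{\cL_0}u\leq f$ in the viscosity sense, and using $\cM^-_{\cL_0}(-v)=-\cM^+_{\cL_0}v$ and the invariance of nonlocal operators under constants, the function $w:=M-u$ satisfies $w_t-\cM^-_{\cL_0}w\geq -f$ for every constant $M$. In order to apply Theorem~\ref{PE} we need $w\geq 0$ globally; I achieve this by replacing $u$ outside an enlarged spatial ball by $u\wedge M$. The resulting error in the nonlocal operator is pointwise bounded by a constant multiple of $\|u^+(\cdot,t)\|_{L^1(\w_\s)}$, thanks to the uniform bound $K\leq\L$ on the kernels in $\cL_0$, and is absorbed into an augmented source term.

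Assume now by contradiction that $M^*:=\sup_{B_1\times(-1,0]}u^+$ is very large, and pick $(x^*,t^*)$ where $u$ is close to $M^*$. Applying Theorem~\ref{PE} with $M=M^*$ on a small fixed cylinder $C_{r,r^\s}(x^*,t^*)$ fitting inside $B_2\times(-2,0]$, the inequality $\inf w\approx 0$ yields $|\{u\leq M^*/2\}\cap C_{r,r^\s}|\leq C(M^*)^{-\e}|C_{r,r^\s}|$, while the weighted $L^1$ hypothesis on $u^+$ gives $|\{u>M^*/2\}\cap C_{r,r^\s}|\leq C(M^*)^{-1}$ (using that $\w_\s$ is bounded below on $B_2$). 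Since the two level sets partition $C_{r,r^\s}$, their measures must sum to $|C_{r,r^\s}|$, whereas both bounds tend to zero as $M^*\to\8$: this forces a universal bound $M^*\leq C$. A finite covering of $B_1\times(-1,0]$ by such cylinders then extends the point-wise bound to the whole inner cylinder.

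The main obstacle is the globalization step: Theorem~\ref{PE} depends on the values of $w$ on all of $\R^n$, while $M-u$ is naturally non-negative only inside $\Omega$. Quantifying the truncation error is precisely what introduces the weight $\w_\s$ and accounts for the term $\|u^+\|_{L^1((t_1,t_2]\mapsto L^1(\w_\s))}$ on the right-hand side of the final estimate.
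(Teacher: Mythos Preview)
The paper does not actually prove this lemma: it is listed under ``Previous Results'' and attributed to \cite{CD3}, so there is no in-paper argument to compare against. That said, your outline is the standard route to a local maximum principle from a weak Harnack inequality, and it is essentially correct: pass from $u_t-Iu\le f$ to $u_t-\cM^+_{\cL_0}u\le f$ via ellipticity and $I0=0$, flip to $w=M-u$ so that $w_t-\cM^-_{\cL_0}w\ge -f$, enforce $w\ge0$ globally by truncating the tail and absorbing the resulting kernel error (which is exactly where $\|u^+\|_{L^1(\w_\s)}$ enters), then use Theorem~\ref{PE} at a near-maximum point together with Chebyshev on $\{u>M^*/2\}$ to reach a contradiction if $M^*$ is too large.

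Two small points. First, there is a mild geometric mismatch: you set $M^*=\sup_{B_1\times(-1,0]}u^+$ but truncate only outside an ``enlarged'' ball, so in the annulus between $B_1$ and that ball you have no reason to know $u\le M^*$, and hence $w\ge0$ may fail there; moreover the truncation error $\cM^+_{\cL_0}\big((u-M^*)^+\chi_{\{\text{outside}\}}\big)(x)$ is only uniformly controlled when $x$ stays a fixed distance from the truncation region. The clean fix is to work with $v=(M^*-u)^+$ globally (so $v\ge0$ automatically), prove the bound for the sup over $B_{1/2}\times(-1/2,0]$ in terms of data on $B_1\times(-1,0]$, and recover the general compactly contained pair by translation and scaling---the constant in the lemma is allowed to depend on the domains. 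Second, the final ``finite covering'' sentence is superfluous: your contradiction already bounds $M^*$ itself, not merely $u(x^*,t^*)$.
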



\begin{theorem}[H\"older regularity]
Let $u$ satisfies
\begin{alignat*}{2}
 u_t - \cM^+_{\cL_0}u &\leq f(t) &&\text{ in $C_{1,1}$},\\
 u_t - \cM^-_{\cL_0}u &\geq -f(t) &&\text{ in $C_{1,1}$},
\end{alignat*}
Then there is some $\a \in (0,1)$ and $C>0$, depending only on $n$, $\l$, $\L$ and $\b$, such that for every $(y,s), (x,t) \in C_{1/2,1/2}$
\begin{align*}
\frac{|u(y,s) - u(x,t)|}{(|x-y| + |t-s|^{1/\s})^\a} \leq C\1 \|u\|_{L^1((-1,0]\mapsto L^1(\w_\s))} + \|f\|_{L^1(0,1)}\2.
\end{align*}
\end{theorem}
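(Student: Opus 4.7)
The plan is a Krylov--Safonov type iteration adapted to the non-local parabolic setting, using only the Point Estimate (Theorem \ref{PE}) and the Oscillation Lemma (Lemma \ref{lem:oscillation}). The linearity in $u$ and $f$ of the claimed estimate, together with translation invariance, reduces matters to proving H\"older continuity at the single point $(0,0)$ under the normalization
\begin{align*}
M \;:=\; \|u\|_{L^1((-1,0]\mapsto L^1(\w_\s))} + \|f\|_{L^1(-1,0)} \;\leq\; 1.
\end{align*}
The output is the existence of universal $r \in (0,1/2)$ and $\theta \in (0,1)$ and numbers $m_k \leq M_k$ (with $m_k$ non-decreasing and $M_k$ non-increasing) such that, writing $C_k := C_{r^k,r^{k\s}}$,
\begin{align*}
m_k \;\leq\; u \;\leq\; M_k \text{ in } C_k, \qquad M_k - m_k \;\leq\; \theta^k.
\end{align*}
Choosing $k$ with $r^{k+1} \leq |x|+|t|^{1/\s} < r^k$ then yields $|u(x,t)-u(0,0)| \leq C(|x|+|t|^{1/\s})^\a$ with $\a := \log\theta/\log r$, from which the seminorm bound at any pair of points in $C_{1/2,1/2}$ follows by running the same argument centered at an arbitrary point of that cylinder.

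The inductive step is the diminish of oscillation. Given $m_k, M_k$ and $\bar c_k := (m_k+M_k)/2$, rescale
\begin{align*}
\tilde v_k(x,t) \;:=\; 2\,\theta^{-k}\bigl(u(r^k x,\, r^{k\s}t) - \bar c_k\bigr).
\end{align*}
By construction $|\tilde v_k|\leq 1$ on $C_{1,1}$, and the parabolic scaling of the two extremal inequalities produces the same system in $C_{1,1}$ with a forcing term of norm $\lesssim \theta^{-k}r^{k\s}\|f\|_{L^1}$. At least one of the sets $\{\tilde v_k\geq 0\}$, $\{\tilde v_k\leq 0\}$ fills half of $C_{1/2,(1/2)^\s}$; in the first case, $w_k := (1-\tilde v_k)^+$ is non-negative globally and satisfies $(w_k)_t - \cM^-_{\cL_0} w_k \geq -F_k$ on the appropriate sub-cylinder, so Theorem \ref{PE} forces $w_k \geq c_0$ on a smaller cylinder. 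This gives $\tilde v_k \leq 1-c_0$, hence $M_{k+1}-m_{k+1} \leq (1-c_0/2)\theta^k$, and setting $\theta := 1-c_0/2$ closes the induction.

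The principal obstacle is controlling the non-local tail that the truncation introduces into $F_k$: outside $B_1$, $w_k$ disagrees with $1-\tilde v_k$, contributing an extra inhomogeneity of the form $\cM^-_{\cL_0}\bigl((w_k-(1-\tilde v_k))\chi_{B_1^c}\bigr)$. I would split $B_1^c$ into the dyadic shells $A_j := B_{r^{-j}}\sm B_{r^{-j+1}}$ for $1\leq j\leq k$ and the far region $B_{r^{-k}}^c$. On $A_j$, using the oscillation bounds already established at levels $k,k-1,\ldots,k-j$ together with the telescoping $|\bar c_k-\bar c_{k-j}|\lesssim \theta^{k-j}$, one obtains $|\tilde v_k|\lesssim \theta^{-j}$ after rescaling, contributing $\lesssim \theta^{-j}r^{j\s}$ to the integral against $\w_\s$. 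Fixing $r$ so small that $r^\s/\theta < 1$ makes the sum over $j$ a universal constant. The far region contributes $\lesssim \theta^{-k}r^{k\s}\,\|u\|_{L^1(\w_\s)}\leq r^{k\s}\theta^{-k}M$, also absorbed once $r^\s<\theta$. Consequently $F_k$ is bounded at every scale by a universal constant, so Theorem \ref{PE} applies uniformly in $k$. The base case $k=0$ is covered by the normalization $M\leq 1$. Restoring this normalization multiplies the final estimate by $M$, giving the claim.
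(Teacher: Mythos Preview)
The paper does not prove this theorem: it is listed in Section~\ref{VSP} under ``Previous Results'' and is quoted from the authors' earlier work \cite{CD}, \cite{CD2}, \cite{CD3}, so there is no in-paper proof to compare your proposal against. That said, your outline is precisely the Krylov--Safonov iteration with non-local tail control that those references carry out, so in spirit you are reproducing the intended argument.

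Two points deserve attention. First, your claim that ``the base case $k=0$ is covered by the normalization $M\leq 1$'' is not quite right: the normalization only bounds $\|u\|_{L^1((-1,0]\mapsto L^1(\w_\s))}$, not $\|u\|_{L^\infty}$. To obtain $m_0\leq u\leq M_0$ with $M_0-m_0\leq 1$ on an initial cylinder you must first apply the Oscillation Lemma (Lemma~\ref{lem:oscillation}) to $u$ and to $-u$, which upgrades the weighted $L^1$ control to an interior $L^\infty$ bound; you mention Lemma~\ref{lem:oscillation} in your opening sentence but never actually invoke it. Second, the Point Estimate as stated in Theorem~\ref{PE} has a built-in time shift: the distribution is controlled in $C_{r,r^\s}$ while the infimum is taken on the forward-shifted cylinder $C_{r,r^\s}(0,r^\s)$. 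Your choice of cylinders (``half of $C_{1/2,(1/2)^\s}$'' feeding into an infimum on a smaller cylinder) must respect this shift; as written the geometry is slightly off, though this is only a matter of relabeling.

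The tail accounting via dyadic shells and the choice $r^\s<\theta$ to make the geometric series converge is the correct mechanism. Be aware of the mild circularity---$\theta$ comes from the Point Estimate, whose applicability requires the truncation error $F_k$ to be small, which in turn requires $r^\s<\theta$---but this is resolved in the standard way: the Point Estimate yields a universal $c_0$ once the right-hand side is below any fixed threshold $\e_0$, so one first fixes $\e_0$, reads off $\theta$, and only then chooses $r$ small enough that the tail plus the rescaled $f$ stay below $\e_0$.
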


\begin{theorem}[Regularity for translation invariant operators]\label{C1aold}
Let $\cL \ss \cL_1$, $I:\R^\cL\to\R$ be uniformly elliptic, translation invariant and such that $I0=0$. Let $u$ satisfies
\begin{alignat*}{2}
u_t - Iu = f(t) \text{ in $C_1$}.
\end{alignat*}
Then there is some $\a \in (0,1)$ and $C>0$, depending only on $n$, $\l$, $\L$ and $\b$, such that for every $(y,s), (x,t) \in C_{1/2,1/2}$,
\begin{align*}
|Du(x,t)|+\frac{|Du(x,t)-Du(y,s)|}{(|x-y| + |t-s|^{1/\s})^\a} \leq C\1\|u\|_{L^1((-1,0]\mapsto L^1(\w_\s))} + \|f\|_{L^1(0,1)}\2.
\end{align*}
\end{theorem}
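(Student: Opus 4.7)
The plan is a translation-invariance bootstrap via first-order incremental quotients. As a starting point, the Hölder regularity theorem already gives $u \in C^{\alpha_0}$ in the parabolic metric $d((x,t),(y,s)) := |x-y| + |t-s|^{1/\s}$ for some universal $\alpha_0 \in (0,1)$. Fixing a small increment $h \in \mathbb{R}^n$ and setting $w_h(x,t) := u(x+h,t) - u(x,t)$, the translation invariance of $I$ together with $f = f(t)$ makes the inhomogeneity cancel in the subtracted equation, and the uniform ellipticity $\mathcal{M}^- v \leq Iu - I(u-v) \leq \mathcal{M}^+ v$ applied to $v = w_h$ yields that $w_h$ is trapped between the Pucci extremal inequalities
\[
(w_h)_t - \mathcal{M}^+_{\mathcal{L}_1} w_h \leq 0 \leq (w_h)_t - \mathcal{M}^-_{\mathcal{L}_1} w_h
\]
on the slightly smaller cylinder $C_{1-|h|,\,1-|h|^\s}$.

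Applying the Hölder regularity theorem to $w_h$ gives $[w_h]_{C^{\alpha_0}(C_{1/2})} \leq C\|w_h\|_{L^1((-1,0]\mapsto L^1(\omega_\s))} \leq C|h|^{\alpha_0}$, where the last step combines the $C^{\alpha_0}$ bound on the bulk of $u$ with a standard shift estimate on the tail. Taking the supremum over $h$ promotes $u$ to $C^{2\alpha_0}$. Iterating with exponents $\alpha_0, 2\alpha_0, \ldots$ on successively smaller cylinders reaches $u \in C^{1-\varepsilon}$ in $C_{3/4}$ for any prescribed $\varepsilon > 0$ after finitely many steps.

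To cross the exponent $1$, renormalize $W_h := w_h/|h|$. By the previous step, $\|W_h\|_{L^\infty_{\mathrm{loc}}}$ stays bounded as $|h| \to 0$. Controlling the tail $\|W_h\|_{L^1(\omega_\s)}$ uniformly in $h$ is the crucial step: this is done by an integration by parts that moves a derivative off $W_h$ and onto the kernel $K$, which is permissible precisely because of $|DK(y)| \leq \Lambda |y|^{-1}$ in the definition of $\mathcal{L}_1$. Once the tail is uniformly bounded, a further application of the Hölder theorem gives $[W_h]_{C^\alpha(C_{1/2})} \leq C$ uniformly in small $h$; passing to the limit $h \to 0$ along coordinate directions produces $\partial_{e_i} u \in C^\alpha$ in the parabolic metric, which is exactly the $\alpha$-Hölder estimate in space and $\alpha/\s$-Hölder estimate in time claimed in the theorem. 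The pointwise bound on $|Du|$ follows from the same argument evaluated at unit scale.

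The main obstacle is the threshold crossing at exponent $1$. Below it the bootstrap is a clean iteration using only translation invariance and the pre-existing Hölder estimate; above it one must justify that the nonlocal operator is continuous on first-order difference quotients in a topology strong enough to preserve the extremal sub/supersolution property, and this requires extra regularity of $K$. The $\mathcal{L}_1$ hypothesis is exactly the amount of smoothness that lets the integration by parts go through and keeps the tail of $W_h$ controlled uniformly in $h$; it is also the reason only $\mathcal{L}_1$ (rather than $\mathcal{L}_2$) is needed for this $C^{1,\alpha}$ estimate, whereas the main $C^{\s+\alpha}$ result of the paper requires the stronger class $\mathcal{L}_2$.
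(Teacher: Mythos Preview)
The paper does not actually prove this theorem: it is listed in Section~\ref{VSP} under ``Previous Results'' and is quoted without proof from the authors' earlier work \cite{CD3} (building on the elliptic version in \cite{C1,C2}). So there is no in-paper argument to compare against.

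Your outline is the standard incremental-quotient bootstrap and matches the strategy used in those references. The structure---H\"older estimate $\Rightarrow$ differences $w_h$ solve the extremal inequalities by translation invariance $\Rightarrow$ iterate to $C^{1-\varepsilon}$ $\Rightarrow$ renormalize to $W_h = w_h/|h|$ and use the $\cL_1$ smoothness to cross the Lipschitz threshold---is correct. One clarification on the threshold step: in the actual arguments of \cite{C1} and \cite{CD3} the $|DK(y)|\le \L|y|^{-1}$ hypothesis is not used to bound $\|W_h\|_{L^1(\w_\s)}$ directly (that quantity need not be uniformly bounded in $h$, since the far-away values of $u$ are not differentiable). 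Instead one localizes by a cutoff $\eta$, works with $\eta W_h$ (which does have controlled tails), and the $\cL_1$ condition is used to bound the \emph{error in the equation} introduced by the cutoff: the nonlocal tail contribution $\int_{B_r^c} W_h(x+y)K^\s(y)\,dy$ is rewritten, after integrating the finite difference by parts, as an integral of $u$ against a translate of $DK^\s$, which is bounded by $\|u\|_{L^1(\w_\s)}$ uniformly in $h$. With that correction your sketch is accurate.
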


The previous Theorem does not give more regularity in time even if $I$ is translation invariant in time and $f \equiv 0$. In \cite{CD} the authors gave an example of a function, not better than Lipschitz in its time variable, solving the fractional heat equation. However, further regularity in time can be retrieved via the Oscillation Lemma if the Dirichlet data has a smoothness condition controlled by,
\begin{align*}
[u]_{C^{0,1}((t_1,t_2]\mapsto L^1(\w_\s))} &:= \sup_{(t-\t,t] \ss (t_1,t_2]} \frac{\|u(t)-u(t-\t)\|_{L^1(\w_\s)}}{\t}.
\end{align*}


\begin{theorem}[Further regularity in time]\label{furthertime}
Let $\cL \ss \cL_0$, $I:\R^\cL\to\R$ be uniformly elliptic, translation invariant such that $I0=0$. Let $u$ satisfies
\begin{align*}
u_t-Iu = 0 \text{ in $C_{1,1}$}.
\end{align*}
Then there is some $\a \in (0,1)$ and $C>0$, depending only on $n$, $\l$, $\L$ and $\b$, such that for every $(x,t), (y,s)\in C_{1/2,1/2}$ we have 
\begin{align*}
|u_t(x,t)| + \frac{|u_t(x,t)-u_t(y,s)|}{(|x-y|+|t-s|^{1/\s})^\a}\leq C[u]_{C^{0,1}((-1,0]\mapsto L^1(\w_\s))}.
\end{align*}
\end{theorem}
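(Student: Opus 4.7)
The plan is to differentiate the equation in time via difference quotients, exploit the time translation invariance of $I$, and then run the previously stated linear estimates on the quotients.

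First I would fix a small $h > 0$ and consider the forward difference quotient
$w_h(x,t) := h^{-1}\bigl(u(x,t+h) - u(x,t)\bigr)$,
defined on $B_1 \times (-1,-h]$. Since $I$ is translation invariant in time, both $u(\cdot,\cdot+h)$ and $u$ solve $u_t - Iu = 0$, and uniform ellipticity gives
\begin{align*}
(w_h)_t - \mathcal{M}^+_{\cL} w_h &\leq 0, \\
(w_h)_t - \mathcal{M}^-_{\cL} w_h &\geq 0,
\end{align*}
in $B_1 \times (-1,-h]$. Moreover, for each $t$,
$\|w_h(\cdot,t)\|_{L^1(\omega_\s)} \leq [u]_{C^{0,1}((-1,0]\mapsto L^1(\omega_\s))}$.

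Second, I would apply the Oscillation Lemma (Lemma \ref{lem:oscillation}) separately to $w_h$ and $-w_h$ (with $f\equiv 0$) on, say, $C_{3/4,3/4}\cc C_{1,1-h}$ for $h$ small, to conclude a uniform $L^\infty$ bound
$\sup_{C_{3/4,3/4}} |w_h| \leq C[u]_{C^{0,1}((-1,0]\mapsto L^1(\omega_\s))}$.
In particular, this shows $u$ is Lipschitz in time in $C_{3/4,3/4}$ with this same constant, so $u_t$ exists almost everywhere.

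Third, because $w_h$ satisfies the extremal inequalities in a sub-cylinder, I would invoke the interior Hölder estimate (the H\"older regularity Theorem), which yields
$[w_h]_{C^{0,\a}(C_{1/2,1/2})} \leq C\bigl(\|w_h\|_{L^1((-1,0]\mapsto L^1(\omega_\s))}\bigr) \leq C[u]_{C^{0,1}((-1,0]\mapsto L^1(\omega_\s))}$,
uniformly in $h$, with the parabolic scaling $(|x-y|+|t-s|^{1/\s})^\a$.

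Finally, the uniform $L^\infty$ and Hölder bounds on $\{w_h\}$ combined with Arzel\`a--Ascoli provide a subsequence $w_{h_k}$ converging uniformly on $C_{1/2,1/2}$ to a continuous function $w$ satisfying the same bounds. Since $u$ is Lipschitz in time, $u_t$ exists a.e.\ and equals $\lim_{h\to 0} w_h$ pointwise a.e., forcing $u_t = w$ a.e.; continuity of $w$ then promotes this to an everywhere identification, delivering the claimed pointwise bound and H\"older modulus for $u_t$.

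The main technical point I expect to have to be careful with is the last step: passing from the uniform bounds on incremental quotients to a \emph{classical} pointwise statement about $u_t(x,t)$, rather than merely an a.e.\ or distributional one. This is where the combined use of the $L^\infty$ bound (yielding time-Lipschitzness of $u$, hence a.e.\ differentiability) and the H\"older bound (yielding uniqueness of the subsequential limit and continuity of $w$) is essential; without both one cannot identify $w$ with $u_t$ as continuous functions on all of $C_{1/2,1/2}$.
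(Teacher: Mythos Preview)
Your proposal is correct and follows the natural approach. Note, however, that the paper does not actually contain a proof of this theorem: it is stated in the ``Previous Results'' subsection as a known estimate imported from the authors' earlier work. The only indication the paper gives is the remark immediately preceding the statement, that ``further regularity in time can be retrieved via the Oscillation Lemma if the Dirichlet data has a smoothness condition controlled by $[u]_{C^{0,1}((t_1,t_2]\mapsto L^1(\omega_\sigma))}$.'' Your argument---time-difference quotients, extremal inequalities from uniform ellipticity and translation invariance, then the Oscillation Lemma for the $L^\infty$ bound followed by the H\"older estimate---is precisely the argument that remark is alluding to, so your proof is in line with what the paper intends.
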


\section{Equations for $Lu$ by concavity and translation invariance}\label{EQLU}

For this part we fix $\s\in[1,2)$, $\cL \ss \cL_2$, and $u$ such that,
\begin{align*}
&u_t - \cM^-_\cL u = 0 \text{ in $C_{8,3}$},\\
&\|u\|_{L^\8((-3,0] \mapsto L^1(\w_\s))} + [u]_{C^{0,1}((-3,0] \mapsto L^1(\w_\s))} \leq 1.
\end{align*}
We can assume that $u$ is a classical solution with smooth boundary and initial data. Otherwise, we approximate $u$ by a sequence of classical solutions with smooth boundary and initial data and recover the estimates of this chapter in the limit by the regularization procedure described in \cite{CD2}. The only thing we need to be careful about is that those a priori estimates are independent of (fractional) derivatives of $u$ which are not accessible by the viscosity solutions.

Many results in this and the following sections can be obtained by controlling $\|u\|_{L^1((-5,0] \mapsto L^1(\w_\s))}$, instead of the $L^\8$ norm; however, when it is coupled with the bound for $[u]_{C^{0,1}((-5,0] \mapsto L^1(\w_\s))}$ it actually implies $L^\8$ bound.

It is convenient for this section to introduce the following notation, given $K(y) \geq 0$ let
\begin{align*}
K^\s(y) := (2-\s)\frac{K(y)}{|y|^{n+\s}}.
\end{align*}
We denote the convolution by,
\begin{align*}
v \ast w(x) &:= \int w(x+y)v(y)dy.
\end{align*}
In particular, given that $K \geq 0$ goes to zero about the origin with at least a quadratic rate, then we can decompose a linear operator as:
\begin{align*}
L_{K,b}^\s u = \1K^\s \ast {} - \|K^\s\|_1 - \1\int_{B_1} yK^\s(y)dy - b\2\cdot D\2u.
\end{align*}

\begin{property}
Let $\a\in\R$, $b \in \R^n$ and $\eta \geq 0 \in L^1(\R^n)$. Then the following holds for any regular function $v$,
\begin{enumerate}
\item \textbf{Homogeneity:} $\cM^\pm_\cL(\a v) = \a\cM^\pm_\cL v$.
\item \textbf{Translation:} $\cM^-_\cL (b\cdot D v) \leq b\cdot D \cM^\pm_\cL v \leq \cM^+_\cL (b\cdot D v)$.
\item \textbf{Concavity:} $\eta\ast\cM^-_\cL v \leq \cM^\pm_\cL(\eta\ast v) \leq \eta\ast\cM^+_\cL v$.
\end{enumerate}
\end{property}

\begin{corollary}\label{cor:convolution}
For $K \geq 0$, $b \in \R^n$ and $\varphi\in C^\8_0(B_2\mapsto[0,1])$ such that $\varphi = 1$ in $B_1$, it holds that,
\begin{align*}
(L_{K,b}^\s u)_t - \cM^+_\cL(L_{K,b}^\s u) \leq ([(1-\varphi)K^\s]\ast u)_t - \cM^-_\cL ([(1-\varphi)K^\s]\ast u) \text{ in $C_{6,3}$}.
\end{align*}
In particular, if $\supp K \ss B_1$, then,
\begin{align*}
(L_{K,b}^\s u)_t - \cM^+_\cL(L_{K,b}^\s u) \leq 0 \text{ in $C_{6,3}$}.
\end{align*}
\end{corollary}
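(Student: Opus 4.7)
The plan is to split the kernel using $\varphi$ so that the tail outside $B_1$ is isolated as a convolution term while the remainder is a truncated operator to which concavity and a short contact argument apply cleanly. Setting $\eta:=(1-\varphi)K^\s\ge0$, the key observation is that $(1-\varphi)\chi_{B_1}\equiv 0$ (because $\varphi=1$ on $B_1$), so splitting $K^\s=\varphi K^\s+(1-\varphi)K^\s$ and using this vanishing inside the gradient correction of $\d u(x;y)$ yields the identity
\begin{align*}
L_{K,b}^\s u=Fu+\eta\ast u,\qquad Fv(x):=\int\d v(x;y)\,\varphi(y)K^\s(y)\,dy+b\cdot Dv(x)-\|\eta\|_1 v(x).
\end{align*}
Here $F$ is linear and translation invariant, its nonlocal part has the nonnegative kernel $\varphi K^\s$ supported in $B_2$, and $F$ commutes on smooth functions with every $L'\in\cL$.

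The decisive algebraic step is the superadditivity $\cM^+_\cL(A+B)\ge\cM^+_\cL(A)+\cM^-_\cL(B)$, immediate from writing $\cM^+_\cL$ as a supremum and taking an almost-maximiser of $A$. Applied with $A=Fu$, $B=\eta\ast u$, and combined with $\partial_t$ commuting through $L$, $F$ and $\eta\ast$, it gives
\begin{align*}
(Lu)_t-\cM^+_\cL(Lu)\le\bigl[(Fu)_t-\cM^+_\cL(Fu)\bigr]+\bigl[(\eta\ast u)_t-\cM^-_\cL(\eta\ast u)\bigr].
\end{align*}
The second bracket is exactly the right-hand side of the corollary, so it suffices to show the first bracket is nonpositive. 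The ``in particular'' statement then falls out immediately: when $\supp K\ss B_1$ one has $\eta\equiv 0$, and the right-hand side collapses to zero.

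For the remaining inequality $(Fu)_t\le\cM^+_\cL(Fu)$ I argue by contact. Smoothness of $u$ gives $(Fu)_t=F(u_t)=F(\cM^-_\cL u)$. Fix $(x,t)$, choose $L^\ast\in\cL$ realising (or $\e$-approximating) the infimum there, and set $g:=L^\ast u-\cM^-_\cL u\ge0$ with $g(x,t)=0$, so $Dg(x,t)=0$. Since the kernel of $F$ is nonnegative, evaluation at this spatial minimum gives $Fg(x,t)=\int g(x+y,t)\varphi(y)K^\s(y)\,dy\ge 0$, whence $F(\cM^-_\cL u)(x,t)\le F(L^\ast u)(x,t)$. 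Commuting two translation invariant linear operators rewrites the right-hand side as $L^\ast(Fu)(x,t)$, which is $\le\cM^+_\cL(Fu)(x,t)$ because $L^\ast\in\cL$.

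The main obstacle is closing this contact step when the infimum defining $\cM^-_\cL$ is not attained at $(x,t)$; this is handled by the standard $\e$-approximation, using the classical smoothness of $u$ to control the value and gradient of the approximating $g_\e$ in the limit. All the integrability and commutativity needed to evaluate $F$ on both $\cM^-_\cL u$ and $L^\ast u$ are guaranteed by $u$ being a classical solution, as arranged at the start of this section.
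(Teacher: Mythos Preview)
Your proof is correct and follows essentially the same route as the paper: the same splitting of $L_{K,b}^\s$ into a ``local'' piece (your $F$, the paper's $L$) plus the tail convolution $[(1-\varphi)K^\s]\ast{}$, followed by the superadditivity of $\cM^+_\cL$ to separate the two contributions and then the observation that the local piece applied to the equation $u_t-\cM^-_\cL u=0$ vanishes in $C_{6,3}$. The minor differences are presentational: the paper introduces an $\e$-truncation $K_\e=\chi_{B_\e^c}K$ so that the resulting kernel is in $L^1$ and one can write the local part as $\varphi K_\e^\s\ast{}-\|K_\e^\s\|_1-c\cdot D$, recovering the result by stability as $\e\searrow 0$; your formulation keeps the $\d u$-representation in $F$ and thereby sidesteps the truncation. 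You also spell out the contact argument for $F(\cM^-_\cL u)\le \cM^+_\cL(Fu)$ explicitly, whereas the paper records this step as the single line $(Lu)_t-\cM^+_\cL(Lu)\le L(u_t-\cM^-_\cL u)$, relying on the listed Homogeneity/Translation/Concavity properties; your pointwise argument at the minimiser $L^\ast$ is a clean way to justify that line.
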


\begin{proof}
Let, for $\e \in (0,1)$, $K_\e := \chi_{B_\e^c}K$. We decompose the operator $L_{K_\e,b}^\s$ as a sum of a local and a non-local operator, the non-local being the one appearing on the right hand side of the conclusion of the lemma,
\begin{align*}
L_{K_\e,b}^\s &= L + NL,\\
&:= (L_{K_\e,b}^\s - K_\e^\s(1-\varphi)\ast) + K_\e^\s(1-\varphi)\ast,\\
&= \1\varphi K_\e^\s\ast{} - \|K_\e^\s\|_1 - \1\int_{B_1}yK_\e^\s(y)dy - b\2 \cdot D \2 + K_\e^\s(1-\varphi)\ast.
\end{align*}
Then,
\begin{align*}
(L_{K_\e,b}^\s u)_t - \cM^+_\cL(L_{K_\e,b}^\s u) &\leq \1(Lu)_t - \cM^+_\cL(Lu)\2 + \1(NLu)_t - \cM^-_\cL(NLu)\2,\\
&\leq L\1u_t - \cM^-_\cL u\2 + \1(NLu)_t - \cM^-_\cL(NLu)\2.
\end{align*}
In $C_{6,3}$ the first term is zero as the local operator $L$ does not take into account the values of $\1u_t - \cM^+_\cL u\2$ outside of $B_8$. The result now follows in the limit as $\e\searrow0$ by stability.
\end{proof}

\begin{property}[Integration by parts]
Let $K \geq 0$, $b \in \R^n$, $(\bar K(y), \bar b) := (K(-y), -b)$ and for $L = L_{K,b}^\s$, $\bar L = L_{\bar K,\bar b}^\s$. Then the following holds for any pair of regular/integrable functions $v$ and $w$,
\begin{align*}
\int vLw = \int w\bar L v.
\end{align*}
In particular,
\begin{align*}
L(v\ast w) = v\ast(Lw) = (\bar L v)\ast w.
\end{align*}
\end{property}

\begin{corollary}\label{cor:convolution2}
For $L_{K,b}^\s \in \cL_2$ it holds that,
\begin{align*}
(L_{K,b}^\s u)_t - \cM^+_\cL(L_{K,b}^\s u) &\leq C \text{ in $C_{6,3}$}.
\end{align*}
for some universal constant $C>0$. 
\end{corollary}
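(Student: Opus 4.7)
By Corollary~\ref{cor:convolution}, it suffices to prove the pointwise bound $w_t - \cM^-_\cL w \leq C$ in $C_{6,3}$, where $w := \eta \ast u$ and $\eta := (1-\varphi)K^\s$. The idea is to split $\eta = \eta_1 + \eta_2$ via a smooth cutoff, with $\eta_1$ supported close to the origin (inside a region where the equation for $u$ applies) and $\eta_2$ a tail to be controlled directly from the hypotheses.

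Fix $\psi \in C_0^\infty(\R^n;[0,1])$ with $\psi=1$ on $B_{3/2}$ and $\psi=0$ outside $B_{7/4}$, and set $\eta_1 := \psi\eta \geq 0$, $\eta_2 := (1-\psi)\eta \geq 0$. Let $w_i := \eta_i\ast u$, so $w = w_1 + w_2$. Since $\cM^-_\cL$ is an infimum of linear operators, $\cM^-_\cL(w_1+w_2) \geq \cM^-_\cL w_1 + \cM^-_\cL w_2$, hence
\begin{align*}
w_t - \cM^-_\cL w \leq \bigl[(w_1)_t - \cM^-_\cL w_1\bigr] + \bigl[(w_2)_t - \cM^-_\cL w_2\bigr].
\end{align*}
For the first bracket: for $(x,t) \in C_{6,3}$ and $y$ in the support of $\eta_1 \ss B_{7/4}$, the point $(x+y,t)$ lies in $C_{8,3}$, so $u_t(x+y,t) = \cM^-_\cL u(x+y,t)$ pointwise inside the convolution integral. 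Thus $(w_1)_t = \eta_1\ast \cM^-_\cL u$, and the concavity property gives $\eta_1\ast\cM^-_\cL u \leq \cM^-_\cL(\eta_1\ast u) = \cM^-_\cL w_1$; this bracket is non-positive.

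The second bracket is handled by bounding $|(w_2)_t|$ and $|\cM^-_\cL w_2|$ separately in $L^\infty(C_{6,3})$. The time derivative: $|\eta_2(y)| \leq (2-\s)\L\w_\s(y)$ on $B_1^c$, so after a change of variables $z=x+y$ and the comparison $\w_\s(z-x) \leq C\w_\s(z)$ valid for $x \in B_6$, one obtains $|(w_2)_t(x,t)| \leq C[u]_{C^{0,1}((-3,0]\mapsto L^1(\w_\s))} \leq C$. The non-local term exploits the full $\cL_2$ hypothesis: $|DK|\leq \L|y|^{-1}$ and $|D^2K|\leq \L|y|^{-2}$ ensure $\eta_2 \in C^2(\R^n)$ with derivatives still dominated at infinity by a constant multiple of $|y|^{-(n+\s)}$. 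Hence $w_2$ is $C^2$ in $x$ with $\|w_2(\cdot,t)\|_{C^2(B_6)} + \|w_2(\cdot,t)\|_{L^1(\w_\s)} \leq C$, where the Oscillation Lemma is invoked to convert the $L^\infty((-3,0]\mapsto L^1(\w_\s))$ control of $u$ into an interior $L^\infty$ bound. The standard $C^{1,1}$ estimate for an operator in $\cL_2$ applied to a smooth function, together with $|b| \leq \b$ (obtained from the drift constraint as $r \to 1^-$), then yields $|\cM^-_\cL w_2(x,t)| \leq C$. The main obstacle is precisely this uniform control of $\cM^-_\cL w_2$, where both the $\cL_2$ smoothness of $K$ and the Oscillation Lemma become indispensable; the remainder of the argument is a direct assembly of these ingredients.
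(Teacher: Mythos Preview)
Your argument is correct in substance, but it takes a somewhat more roundabout path than the paper's and carries a couple of unnecessary appendages.

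\textbf{Comparison with the paper.} Both proofs start from Corollary~\ref{cor:convolution}, reducing the task to bounding $w_t-\cM^-_\cL w$ for $w=\eta\ast u$ with $\eta=(1-\varphi)K^\s$. From there the paper does \emph{not} split $\eta$ further: it bounds $w_t$ by the time-Lipschitz hypothesis exactly as you do, and then handles $\cM^-_\cL w$ via the integration-by-parts identity $L(\eta\ast u)=(\bar L\eta)\ast u$. Since $\eta$ is supported in $B_1^c$ and the $\cL_2$ hypothesis on $K$ gives $|D^k\eta(y)|\leq C|y|^{-(n+\s+k)}$, the function $\bar L\eta$ is pointwise bounded by $C\w_\s$; convolving against $u$ (with $\|u\|_{L^\infty(L^1(\w_\s))}\leq 1$) then gives $|\cM^-_\cL w|\leq C$ in one line. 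Your route instead shows that $w_2=\eta_2\ast u$ is $C^2$ with controlled $L^1(\w_\s)$ tail and applies the standard $C^{1,1}$ estimate for operators in $\cL_0$. This also works (the bound $\|w_2\|_{L^1(\w_\s)}\leq C$ follows from the elementary convolution inequality $\int|\eta_2(z-x)|\w_\s(x)\,dx\leq C\w_\s(z)$), but it is the ``differentiate then estimate'' version of what the paper does in one stroke via $L(\eta\ast u)=(\bar L\eta)\ast u$.

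\textbf{Two superfluous ingredients.} First, your secondary split $\eta=\eta_1+\eta_2$ is not needed: the concavity argument for $w_1$ yields a non-positive contribution, so you could simply drop it and run your $w_2$ argument directly on $w=\eta\ast u$ (which is already supported in $B_1^c$). Second, the Oscillation Lemma plays no role here. The bounds $\|D^kw_2(\cdot,t)\|_{L^\infty(B_6)}\leq C$ and $\|w_2(\cdot,t)\|_{L^1(\w_\s)}\leq C$ follow directly from $\|u(\cdot,t)\|_{L^1(\w_\s)}\leq 1$ and the decay of $D^k\eta_2$; no interior $L^\infty$ bound on $u$ is required, and invoking the Oscillation Lemma only obscures the dependence.
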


\begin{proof}
Corollary \ref{cor:convolution} tells us that it suffices to estimate $([(1-\varphi)K^\s]\ast u)_t - \cM^-_\cL ([(1-\varphi)K^\s]\ast u)$ in $C_{6,3}$,
\begin{align*}
([(1-\varphi)K^\s]\ast u)_t &= [(1-\varphi)K^\s]\ast u_t,\\
&\leq C[u]_{C^{0,1}((-3,0] \mapsto L^1(\w_\s))},\\
&=C,\\
\cM^-_\cL ([(1-\varphi)K^\s]\ast u) &\geq \inf_{L \in \cL_2}L([(1-\varphi)K^\s]\ast u),\\
&= \inf_{L \in \cL_2}(\bar L [(1-\varphi)K^\s]\ast u),\\
&\geq -C.
\end{align*}
In the last inequality we used that $|DK(y)| \leq \L|y|^{-1}$, $|D^2K(y)| \leq \L|y|^{-2}$ and $\|u\|_{L^\8((-3,0] \mapsto L^1(\w_\s))} \leq 1$.
\end{proof}

From now on we denote, for $r_1 > r_2 > 0$, $\psi_{r_1,r_2}\in C^\8_0(B_{r_1} \to [0,1])$ such that $\psi_{r_1,r_2} = 1$ in $B_{r_2}$.

\begin{corollary}\label{cor:convolution3}
Let $6 \geq r_1 > r_2 >0$, $K \geq 0$, $b\in\R^n$ such that either $L_{K,b}^\s \in \cL_2$ or $|b| \leq \b'$, $\supp K \ss B_1$ and $K(y) \in [0,\L']$, then,
\begin{align*}
(\psi_{r_1,r_2} L_{K,b}^\s u)_t-\cM^+_\cL(\psi_{r_1,r_2} L_{K,b}^\s u) &\leq C \text{ in $C_{r_2,3}$}.
\end{align*}
for some universal constant $C>0$ depending also on $r_1,r_2,\b'$ and $\L'$.
\end{corollary}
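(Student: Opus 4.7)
The strategy is to reduce to Corollaries \ref{cor:convolution} and \ref{cor:convolution2} by peeling off the cutoff $\psi := \psi_{r_1, r_2}$ through the sublinearity of $\cM^+_\cL$. Setting $w := L^\s_{K,b} u$, the decomposition $w = \psi w + (1-\psi) w$ together with subadditivity of $\cM^+_\cL$ gives
\begin{align*}
\cM^+_\cL(\psi w) \geq \cM^+_\cL(w) - \cM^+_\cL((1-\psi) w).
\end{align*}
Since $\psi \equiv 1$ on $B_{r_2}$, we have $(\psi w)_t = w_t$ in $C_{r_2, 3}$, so
\begin{align*}
(\psi w)_t - \cM^+_\cL(\psi w) \leq (w_t - \cM^+_\cL(w)) + \cM^+_\cL((1-\psi) w) \text{ in } C_{r_2,3}.
\end{align*}
The first summand is $\leq 0$ by Corollary \ref{cor:convolution} in the case $\supp K \subset B_1$, and $\leq C$ by Corollary \ref{cor:convolution2} in the case $L^\s_{K,b} \in \cL_2$. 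Thus both hypothesized cases handle that piece uniformly.

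It remains to estimate $\cM^+_\cL((1-\psi) w)$ in $C_{r_2, 3}$. The key geometric observation is that $(1-\psi) w$ and its spatial gradient vanish identically on $B_{r_2}$, so the Taylor correction in the definition of $L' v(x)$ collapses, and for every $L' \in \cL$ and $x \in B_{r_2}$:
\begin{align*}
L'((1-\psi) w)(x) = \int \zeta_x(x') w(x') dx', \qquad \zeta_x(x') := \frac{(2-\s)(1-\psi)(x') K_{L'}(x'-x)}{|x'-x|^{n+\s}}.
\end{align*}
The function $\zeta_x$ is smooth, vanishes on $B_{r_2}$, and decays like $\omega_\s(x')$ at infinity, with derivative bounds of similar order depending only on $\psi$ and the $\cL_2$ bounds on $K_{L'}$. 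I would then use the integration by parts property from the paper to transfer $L^\s_{K,b}$ from $u$ onto $\zeta_x$:
\begin{align*}
L'((1-\psi) w)(x) = \int u(x') (\overline{L^\s_{K,b}} \zeta_x)(x') dx'.
\end{align*}
Combined with the standing hypothesis $\|u(\cdot, t)\|_{L^1(\omega_\s)} \leq 1$, the estimate reduces to the pointwise bound
\begin{align*}
|\overline{L^\s_{K,b}} \zeta_x(x')| \leq C(n,\l,\L,\b,\b',\L',r_1,r_2) \, \omega_\s(x'),
\end{align*}
uniformly in $x \in B_{r_2}$ and $L' \in \cL$.

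The main obstacle is verifying this last uniform estimate in both hypothesized cases. When $L^\s_{K,b} \in \cL_2$, the $C^2$ bounds on $K$ combine with the smoothness and $\omega_\s$-decay of $\zeta_x$ to yield the inequality directly from the integral representation of $\overline{L^\s_{K,b}}$. In the second case, although $K$ is only $L^\infty$, the smoothness of $\zeta_x$, the support condition $\supp K \subset B_1$, and $|b| \leq \b'$ imply that the non-local and drift contributions of $\overline{L^\s_{K,b}} \zeta_x$ are dominated by $[\zeta_x]_{C^{1,1}(x')}$ together with a compactly-supported tail, both inheriting the $\omega_\s(x')$ decay. Putting everything together then yields the desired $\leq C$ bound in $C_{r_2, 3}$.
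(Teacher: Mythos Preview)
Your proposal is correct and follows essentially the same approach as the paper: peel off the cutoff via subadditivity of $\cM^+_\cL$, invoke Corollary~\ref{cor:convolution} or~\ref{cor:convolution2} for the uncut term, reduce $L'((1-\psi)w)$ to a convolution against a smooth $\omega_\s$-decaying kernel (your $\zeta_x$), and then transfer $L^\s_{K,b}$ onto that kernel via the integration-by-parts property before using $\|u\|_{L^\infty((-3,0]\mapsto L^1(\omega_\s))}\leq 1$. The paper carries this out in the convolution variable $y$ rather than in $x'$, but the computation is the same; you are in fact slightly more explicit than the paper about distinguishing the two hypothesized cases in the final pointwise estimate.
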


\begin{proof}
We use either Corollary \ref{cor:convolution} or \ref{cor:convolution2} to get that $\psi_{r_1,r_2} L_{K,b}^\s u$ satisfies the following inequality in $C_{r_2,3}$,
\begin{align*}
(\psi_{r_1,r_2} L_{K,b}^\s u)_t-\cM^+_\cL(\psi_{r_1,r_2} L_{K,b}^\s u) &\leq C + \sup_{L \in \cL_2} L((1-\psi_{r_1,r_2})L_{K,b}^\s u),\\
&= C + \sup_{L \in \cL_2} K_L \ast((1-\psi_{r_1,r_2})L_{K,b}^\s u)
\end{align*}
Where $K_L$ is the kernel associated to $L \in \cL_2$, notice the cancellations provided by the fact that $(1-\psi_{r_1,r_2})$ and its gradient are zero in $B_{r_2}$. Now we take a closer look at $[K_L \ast((1-\psi_{r_1,r_2})L_{K,b}^\s u)](x,t)$ for $(x,t) \in C_{r_2,3}$,
\begin{align*}
[K_L\ast((1-\psi_{r_1,r_2})L_{K,b}^\s u)](x,t) &= [(K_L(1-\psi_{r_1,r_2}(x+\cdot))) \ast L_{K,b}^\s u](x,t),\\
&= [L_{\bar K, \bar b}^\s (K_L(1-\psi_{r_1,r_2}(x+\cdot))) \ast u](x,t),\\
&\leq C.
\end{align*}
In the last inequality we used that $|DK_L(y)| \leq \L|y|^{-1}$, $|D^2K_L(y)| \leq \L|y|^{-2}$ and $\|u\|_{L^\8((-3,0] \mapsto L^1(\w_\s))} \leq 1$.
\end{proof}


\section{Estimate for $\D^{\s/2}u$}\label{EstimateforLap}

We keep the same assumptions as before in this part: $\s\in[1,2)$, $\cL \ss \cL_2$, and $u$ such that,
\begin{align*}
&u_t - \cM^-_\cL u = 0 \text{ in $C_{8,3}$},\\
&\|u\|_{L^\8((-3,0] \mapsto L^1(\w_\s))} + [u]_{C^{0,1}((-3,0] \mapsto L^1(\w_\s))} \leq 1.
\end{align*}

\begin{lemma}\label{lem:bound_laplacian}
For $K(y) \in [0,\L], b \in B_\b$,
\begin{align*}
\|L_{K,b}^\s u\|_{L^\8(C_{1,1})} \leq C,
\end{align*}
for some universal constant $C$.
\end{lemma}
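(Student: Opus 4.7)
The plan is to split $L_{K,b}^\s u$ into a long-range tail piece and a compactly supported interior piece, bound the tail directly from the integrability hypotheses on $u$, and control the interior piece via Corollary \ref{cor:convolution3} together with the Oscillation Lemma.

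First, write
\begin{align*}
L_{K,b}^\s u(x) = L_{K\chi_{B_1},b}^\s u(x) + (2-\s)\int_{|y|>1}\1 u(x+y) - u(x)\2\frac{K(y)}{|y|^{n+\s}}\, dy.
\end{align*}
On $\{|y|>1\}$ the kernel is integrable against $\w_\s$, so the tail is uniformly bounded in $L^\8(C_{1,1})$ using $\|u(\cdot,t)\|_{L^1(\w_\s)} \leq 1$ together with the local $L^\8$ bound on $u$ in $C_{2,3}$ (obtained from the Oscillation Lemma applied to $u$, which solves the concave equation with zero right-hand side).

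For the interior piece $v := L_{K\chi_{B_1},b}^\s u$, the truncated kernel $K\chi_{B_1}$ is supported in $B_1$, lies in $[0,\L]$, and $|b|\leq \b$, so the second case of Corollary \ref{cor:convolution3} applies. Choosing $r_1\in(1,6]$ and $r_2\in(1,r_1)$ so that $\psi_{r_1,r_2}\equiv 1$ on $B_1$, the corollary yields
\begin{align*}
(\psi_{r_1,r_2} v)_t - \cM^+_\cL(\psi_{r_1,r_2} v) \leq C \text{ in } C_{r_2,3},
\end{align*}
and the Oscillation Lemma applied to this subsolution gives $v \leq C$ in $C_{1,1}$, which furnishes the upper bound on $L_{K,b}^\s u$.

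The matching lower bound $v \geq -C$ will be obtained by a parallel argument applied to the companion operator $L_{\tilde K,\tilde b}^\s$ with reflected kernel and drift $\tilde K := (\L - K)\chi_{B_1} \in [0,\L]$ (still supported in $B_1$) and $\tilde b := -b \in B_\b$, which by the same reasoning gives $L_{\tilde K,\tilde b}^\s u \leq C$. The algebraic identity
\begin{align*}
L_{K\chi_{B_1},b}^\s u + L_{\tilde K,\tilde b}^\s u = L_{\L\chi_{B_1},0}^\s u
\end{align*}
then reduces the lower bound on $v$ to a two-sided bound on the fixed reference operator $L_{\L\chi_{B_1},0}^\s u$. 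The main obstacle lies precisely in this last step: Corollary \ref{cor:convolution3} produces only a subsolution inequality, so bounding the reference operator from below requires either running a symmetric super-solution argument (propagating the opposite direction of the concavity, translation and homogeneity properties of $\cM^\pm$ in the proof of Corollary \ref{cor:convolution}) or appealing to the bound $|u_t| \leq C$ from Theorem \ref{furthertime}, all while keeping the constants uniform in $\s\in[1,2)$.
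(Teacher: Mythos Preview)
There is a genuine gap. The Oscillation Lemma (Lemma \ref{lem:oscillation}) does not take only a subsolution inequality as input: to bound $\sup(\psi_{r_1,r_2}v)^+$ it also requires control of $\|(\psi_{r_1,r_2}v)^+\|_{L^1((-3,0]\mapsto L^1(\w_\s))}$. You never supply this. Integration by parts,
\[
\int \psi_{r_1,r_2}\, L_{K\chi_{B_1},b}^\s u = \int u\, L_{\bar K\chi_{B_1},-b}^\s \psi_{r_1,r_2},
\]
only controls the \emph{signed} integral, so without first knowing a lower bound on $v$ you cannot bound $\int (\psi_{r_1,r_2}v)^+$. Since your lower bound is deferred to the end (and you yourself flag it as the main obstacle), the upper bound argument is circular as written.

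This is exactly the point where the paper's proof injects two ideas you are missing. First, it handles $L\in\cL$ before general $(K,b)$: for such $L$ the equation itself gives $Lu \geq \cM^-_\cL u = u_t \geq -C$ (using Theorem \ref{furthertime}), so $Lu$ has a free lower bound and the integration-by-parts identity \emph{does} now control $\|(\psi Lu)^+\|_{L^1}$; the Oscillation Lemma then yields $|Lu|\leq C$ on $C_{4,2}$. Second, to pass to arbitrary $K\in[0,\L]$ it invokes an $L^2$ Fourier estimate (Theorem 4.3 in \cite{C4}): once one fixed $L\in\cL$ has $\|Lu(\cdot,t)\|_{L^2(B_3)}\leq C$, one gets $\|L_{K,b}^\s u(\cdot,t)\|_{L^2(B_2)}\leq C$ for every such $(K,b)$, which is the missing integral input to the Oscillation Lemma. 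The lower bound for general $(K,b)$ is then obtained by a companion-operator trick similar in spirit to yours, namely writing $\l L_{K,b}^\s = \L L_{K',b'}^\s - L_{K'',b''}^\s$ with $L_{K',b'}^\s\in\cL$; the crucial difference is that here $L_{K',b'}^\s u$ already has a two-sided bound, whereas your reference operator $L_{\L\chi_{B_1},0}^\s$ does not belong to $\cL$ (its kernel is not $\geq\l$) and so inherits the very same unresolved problem.
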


\begin{proof}
We do it in several steps. Here is a summary of the strategy:
\begin{enumerate}
\item For $L \in \cL$, we bound $Lu$ from below by using the equation for $u$ and the control we have for $u_t$ inside the domain.
\item For $L \in \cL$, we integrate by parts to control $\|Lu(t)\|_{L^1(\w_\s)}$ and then apply Lemma \ref{lem:oscillation} to bound $Lu$ from above.
\item For general $K$ and $b$, we use $L^2$ theory to control $\|L_{K,b}^\s u(t)\|_{L^1(\w_\s)}$ and then apply Lemma \ref{lem:oscillation} to bound $L_{K,b}^\s u$ from above.
\item For general $K$ and $b$, we apply the previous step to 
\begin{align*}
(K'',b'') = \L(K',b') - \l(K,b),
\end{align*}
 with $L_{K',b'}^\s \in \cL$ to bound $L_{K,b}^\s u$ from below.
\end{enumerate}

\textbf{Step 1:} $L \in \cL$, then $Lu \geq -C$ in $C_{8,3}$.

It follows from the equation for $u$ and the regularity in time,
\begin{align*}
Lu \geq \cM^-_\cL u = u_t \geq -[u]_{C^{0,1}((-3,0] \mapsto L^1(\w_\s))}.
\end{align*}

\textbf{Step 2:} $L \in \cL$, then $Lu \leq C$ in $C_{4,2}$.

We apply the Oscillation Lemma to $\psi_{6,5} Lu$. By Corollary \ref{cor:convolution3}, $\psi_{6,5} Lu$ satisfies,
\begin{align*}
(\psi_{6,5} Lu)_t-\cM^+_\cL(\psi_{6,5} Lu) &\leq C \text{ in $C_{5,3}$}.
\end{align*}
We estimate now $\|(\psi_{6,5} Lu)^+\|_{L^1((-3,0]\mapsto L^1(\w_\s))}$. As $\psi_{6,5} Lu$ is bounded from below and compactly supported all we need is to control the following integral
\begin{align*}
\int_{-3}^0 \int \psi_{6,5} Lu &= \int_{-3}^0 \int \1\bar L\psi_{6,5}\2u \leq C.
\end{align*}
By Lemma \ref{lem:oscillation}, we have that $\psi_{6,5} Lu$ is bounded from above in $C_{4,2}$ where it coincides with $Lu$.

\textbf{Step 3:} Given $K(y) \in [0,\L']$ and $b \in B_{\b'}$ then $L_{K,b}^\s u \leq C$ in $C_{1,1}$.

Given $L \in \cL$ the previous steps tell us that $Lu$ is bounded in $C_{4,2}$, from Fourier analysis techniques we get then that (see Theorem 4.3 in \cite{C4}),
\begin{align*}
&\|L_{K,b}^\s u(t)\|_{L^2(B_2)} \leq C\|Lu(t)\|_{L^2(B_3)} \leq C,\\
\Rightarrow \quad &\|L_{K\chi_{B_1},b}^\s u(t)\|_{L^1(B_2)} \leq C + \|L_{K\chi_{B_1^c},0}^\s u(t)\|_{L^1(B_2)},\\
\Rightarrow \quad &\|\psi_{3,2}L_{K\chi_{B_1},b}^\s u\|_{L^1((-2,0]\mapsto L^1(\w_\s))} \leq C.
\end{align*}

By Corollary \ref{cor:convolution3}, $\psi_{3,2} L_{K\chi_{B_1},b}^\s u$ satisfies,
\begin{align*}
(\psi_{3,2} L_{K\chi_{B_1},b}^\s u)_t-\cM^+_\cL(\psi_{3,2} L_{K\chi_{B_1},b}^\s u) &\leq C \text{ in $C_{2,2}$},\\
\int_{-4}^0 f(t)dt &\leq C.
\end{align*}

By Lemma \ref{lem:oscillation}, $\psi_{3,2} L_{K\chi_{B_1},b}^\s u$ gets bounded from above in $C_{1,1}$. By the hypoteses we also obtain the bound for $\psi_{3,2} L_{K,b}^\s u$ in $C_{1,1}$ where it coincides with $L_{K,b}^\s u$,
\begin{align*}
\psi_{3,2} L_{K,b}^\s u \leq C + \psi_{3,2} L_{K\chi_{B_1^c},b}^\s u \leq C + \|u\|_{L^\8((-1,0] \mapsto L^1(\w_\s))}.
\end{align*}

\textbf{Step 4:}  Given $K(y) \in [0,\L]$ and $b \in B_{\b}$ then  $L_{K,b}^\s u \geq -C$ in $C_{1,1}$.

Consider $L_{K',b'}^\s \in \cL$ and $L_{K'',b''}^\s := \L L_{K',b'}^\s - \l L_{K,b}^\s$ such that $|b''| \leq (\L+\l)\b$,
and $K''(y) \in [0,\L^2]$. Given the result from the second step, it suffices to show that $L_{K'',b''}^\s u \geq -C$ in $C_{1,1}$. This now is just a consequence of applying the third step to $L_{K'',b''}^\s u$.
\end{proof}

\begin{corollary}
There is a universal constant $C>0$ such that,
\begin{align*}
(2-\s)\int\frac{|\d u(x,t;y)|}{|y|^{n+\s}}dy \leq C \text{ in $C_{1,1}$}.
\end{align*}
\end{corollary}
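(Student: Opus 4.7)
The plan is to recover the pointwise bound on the symmetric tail integral by applying Lemma \ref{lem:bound_laplacian} twice to two sign-selecting kernels and then subtracting the two outputs.

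Fix $(x,t) \in C_{1,1}$. I would choose the point-dependent kernels
\begin{align*}
K^+(y) := \L\chi_{\{y\,:\,\d u(x,t;y) > 0\}}, \qquad K^-(y) := \L\chi_{\{y\,:\,\d u(x,t;y) < 0\}},
\end{align*}
and pair each with the null drift $b = 0$. Both fall within the admissible range $K \in [0,\L]$, $b \in B_\b$ required by Lemma \ref{lem:bound_laplacian}. Since we have reduced to classical, smooth solutions at the start of this section, $\d u(x,t;\cdot)$ is smooth in $y$ and its sub/superlevel sets are measurable, so $K^\pm$ are legitimate kernels.

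By Lemma \ref{lem:bound_laplacian}, each of $L_{K^+,0}^\s u(x,t)$ and $L_{K^-,0}^\s u(x,t)$ is controlled by a universal constant $C$. The key identity is that subtracting the two telescopes the positive and negative parts of $\d u$ into $|\d u|$,
\begin{align*}
L_{K^+,0}^\s u(x,t) - L_{K^-,0}^\s u(x,t) = (2-\s)\L\int\frac{|\d u(x,t;y)|}{|y|^{n+\s}}dy \leq 2C,
\end{align*}
and dividing by $\L$ yields the desired bound at $(x,t)$.

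There is essentially no obstacle once Lemma \ref{lem:bound_laplacian} is in hand; the only subtle point is that $K^\pm$ depends on the base point $(x,t)$. This is harmless because the constant $C$ in the lemma depends only on $n, \l, \L, \b$ and not on the particular admissible kernel, so the uniform bound applies pointwise with our $(x,t)$-dependent choice of sign-kernel.
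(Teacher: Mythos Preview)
Your argument is correct and is essentially the paper's own approach: both proofs apply Lemma~\ref{lem:bound_laplacian} twice with point-dependent, sign-selecting kernels to isolate $|\d u|$. The paper pairs the constant kernel $K\equiv\L$ (giving a lower bound on $\int\d u$) with your $K^+$ (giving an upper bound on $\int\d^+u$) and then adds, whereas you pair $K^+$ with $K^-$ and subtract; these are algebraically equivalent combinations of the same two estimates.
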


In particular, by Morrey estimates, we have that $u \in C^\a_x(C_{1,1})$ for every $\a \in [1,\s)$, see \cite{Stein}.

\begin{proof}
Using $K(y) := \L(2-\s)|y|^{-(n+\s)}$ in the previous Lemma we get,
\begin{align*}
(2-\s)\int\frac{\d u(x,t;y)}{|y|^{n+\s}}dy \geq -C \text{ in $C_{1,1}$}.
\end{align*}
Fixing $(x,t) \in C_{1,1}$ and using $K(y) := \L(2-\s)\sign(\d u(x,t;y))|y|^{-(n+\s)}$ in the previous Lemma we get,
\begin{align*}
(2-\s)\int\frac{\d^+u(x,t;y)}{|y|^{n+\s}}dy \leq C.
\end{align*}
Adding them up we conclude the Corollary.
\end{proof}

\section{Further regularity}\label{Further}

Regularity $C^{2,\a}$ can be reduced to H\"older regularity of the laplacian. The same holds with respect to $C^{\s+\a}$ regularity and $(-\D)^{\s/2}$, which suits well for non-local equations. On the other hand, $(-\D)^{\s/2}u$ can be thought as a difference of an average of $u$ with itself which relates with the concavity of $\cM^-_\cL$ in a proper way. We will exploit these two facts in this section to prove our $C^{\s+\a}$ regularity result.

We keep the previous hypothesis for this section, $\cL \ss \cL_2$ and $u$ satisfies,
\begin{align*}
&u_t - \cM^-_\cL u = 0 \text{ in $C_{8,3}$},\\
&\|u\|_{L^\8((-3,0] \mapsto L^1(\w_\s))} + [u]_{C^{0,1}((-3,0] \mapsto L^1(\w_\s))} \leq 1.
\end{align*}
In particular we know by now that, for $K(y) \in [0,\L], b \in B_\b$,
\begin{align*}
\|L_{K,b}^\s u\|_{L^\8(C_{1,1})} \leq C.
\end{align*}

Given $A\ss B_1$, let
\begin{align*}
K_A^\s(y) := (2-\s)\frac{\chi_A(y)}{|y|^{n+\s}},
\end{align*}
Fix $\varphi \in C_0^\8(B_1 \to [0,1])$ such that $\varphi = 1$ in $B_{1/2}$ and define
\begin{align*}
w_A(x) :=\varphi(x)\int (\d u(x;y) - \d u(0;y))K_A^\s(y)dy.
\end{align*}
By the properties deduced in the previous sections we have that $w_A$ is glo-bally bounded and satisfies in $C_{1,1}$,
\begin{align*}
(w_A)_t-\cM^+_\cL w_A\leq C.
\end{align*}

Lets consider also the extremal functions,
\begin{align*}
P(x) := \sup_{A\ss B_1} w_A &= (2-\s)\varphi(x)\int_{B_{1/2}}\frac{(\d u(x;y)-\d u(0;y))^+}{|y|^{n+\a}}dy,\\
N(x) := \sup_{A\ss B_1} (-w_A) &= (2-\s)\varphi(x)\int_{B_{1/2}}\frac{(\d u(x;y)-\d u(0;y))^-}{|y|^{n+\a}}dy.
\end{align*}

Our goal is to prove a diminish of oscillation lemma for $P+N$. This implies that $(-\D)^{\s/2}u$ is H\"older continuous and therefore the $C^{\s+\a}$ regula-rity. We start by proving that $P$ and $N$ are comparable modulus a controlled error.



\begin{lemma}\label{lema91}
There exist universal constants $C>0$ and $\a \in (0,1)$ such that for $(x,t)\in C_{1/8,1/2}$ we have,
\begin{align*}
\frac{\l}{\L}N - C|x|^\a\leq P \leq \frac{\L}{\l}N + C|x|^\a.
\end{align*}
\end{lemma}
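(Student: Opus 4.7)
The plan is to exploit that $\cM^-_\cL$ is an infimum, together with the spatial H\"older control on $u_t$ coming from Theorem \ref{furthertime}. Fix $(x,t) \in C_{1/8,1/2}$ and $\e > 0$, and pick $L^* = L_{K^*,b^*}^\s \in \cL$ near-optimal at the origin, so that $L^* u(0,t) \leq \cM^-_\cL u(0,t) + \e = u_t(0,t) + \e$. Since $\cM^-_\cL u(x,t) \leq L^* u(x,t)$, combining with the equation at $x$ yields
\begin{align*}
L^* u(x,t) - L^* u(0,t) \geq u_t(x,t) - u_t(0,t) - \e \geq -C|x|^\a - \e,
\end{align*}
where the last inequality is Theorem \ref{furthertime} applied at fixed time (after the harmless rescaling turning $C_{8,3}$ into $C_{1,1}$).

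Expanding the left-hand side as a kernel integral plus drift and splitting the kernel integral over $B_1$ and $B_1^c$ gives
\begin{align*}
L^* u(x,t) - L^* u(0,t) = \int_{\R^n}(\d u(x;y)-\d u(0;y))K^{*,\s}(y)\,dy + b^*\cdot(Du(x)-Du(0)).
\end{align*}
Since $K^* \in [\l, \L]$ on $B_1$ and $\varphi(x) = 1$ for $|x| \leq 1/8$, splitting the integrand into positive and negative parts yields
\begin{align*}
\int_{B_1}(\d u(x;y)-\d u(0;y))K^{*,\s}(y)\,dy \leq \L P(x,t) - \l N(x,t).
\end{align*}
Combined with the previous inequality, this produces $\L P(x,t) - \l N(x,t) \geq -C|x|^\a - \e$, provided the $B_1^c$ integral and the drift term are both $O(|x|^\a)$.

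This tail-plus-drift estimate is the technical heart of the lemma and its main obstacle, because the critical drift $b^*$ need not be bounded on its own. The trick is the same pairing already used in Corollary \ref{cor:convolution}: by the definition of $\cL_0^\s(\l,\L,\b)$, the combination $b^* - \int_{B_1\setminus B_r}yK^{*,\s}(y)\,dy$ stays bounded by $\b$ uniformly in $r$, so together with the $C^{1,\a'}$ bound $|Du(x)-Du(0)| \leq C|x|^{\a'}$ from Theorem \ref{C1aold}, the singular drift contributes only $O(|x|^{\a'})$. For the $B_1^c$ integral, use the smoothness $|DK^*| \leq \L/|y|$ and the change of variable $z=x+y$, which gives the bound $|K^{*,\s}(z-x)-K^{*,\s}(z)| \leq C(2-\s)|x|/|z|^{n+\s+1}$ valid on $|z| \geq 7/8$. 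Coupled with $u \in L^1(\w_\s)$ this produces an $O(|x|)$ contribution, while the boundary symmetric difference $B_1^c \triangle (B_1 - x)^c$ has measure $O(|x|)$ with bounded integrand and is absorbed as $O(|x|^\a)$ (after lowering $\a$ if necessary, using also the H\"older regularity of $u$ on a neighborhood).

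The upper bound $P(x,t) \leq (\L/\l) N(x,t) + C|x|^\a$ follows by the mirror argument: choose $L^{**} \in \cL$ near-optimal at $(x,t)$ rather than at $(0,t)$. This reverses the concavity-based inequality to $L^{**}u(x,t) - L^{**}u(0,t) \leq C|x|^\a + \e$, which when bounded below by $\l P(x,t) - \L N(x,t)$ (using now $K^{**} \geq \l$ on the positive part and $K^{**} \leq \L$ on the negative part, plus the same tail/drift control) and sent to $\e \to 0$ yields the stated inequality.
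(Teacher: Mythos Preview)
Your proposal is correct and follows essentially the same route as the paper: use the equation at $(0,t)$ and $(x,t)$ together with concavity to trap $L(u_x-u)(0,t)$ between $\l P-\L N$ and $\L P-\l N$ (modulo an $O(|x|^\a)$ error), then invoke Theorem~\ref{furthertime} for $u_t(x,t)-u_t(0,t)$, Theorem~\ref{C1aold} for $Du(x)-Du(0)$, and the smoothness of $K$ for the $B_1^c$ tail. Your choice of a near-optimal $L^*$ at the origin (respectively $L^{**}$ at $(x,t)$) is equivalent to the paper's device of computing $L(u_x-u)(0)$ for all $L$ and taking the supremum/infimum.

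One remark: the passage where you claim that ``the critical drift $b^*$ need not be bounded on its own'' and invoke a cancellation trick is unnecessary here. From the defining condition
\[
\sup_{r\in(0,1)}\Big|b+(2-\s)\int_{B_1\setminus B_r}\frac{yK(y)}{|y|^{n+\s}}\,dy\Big|\leq\b,
\]
letting $r\to1$ shows $|b|\leq\b$ outright. Hence $|b^*\cdot(Du(x)-Du(0))|\leq \b\,[Du]_{C^\a}|x|^\a$ directly, and the pairing argument can be dropped. The paper sidesteps the issue by writing the computation for $L=L^\s_{K,0}$ and only taking the infimum at the end, but either way the drift contributes a harmless $O(|x|^\a)$.
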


\begin{proof}
For $x\in B_{1/8}$, let $u_x(y) = u(x+y)$. Since $u$ solves $u_t-\cM^-_\cL u=0$ in $C_{1,1}$, then difference $(u_x-u)$ satisfies in $C_{7/8,1}$,
\begin{align*}
(u_x-u)_t-\cM^+_\cL(u_x-u)&\leq 0,\\
(u_x-u)_t-\cM^-_\cL(u_x-u)&\geq 0.
\end{align*}

To recover $P$ and $N$ from the previous relations we consider for $L=L_{K,0}^\s\in\cL_2$,
\begin{align*}
L(u_x-u)(0) &= \int(\d u(x;y)-\d u(0;y))K^\s(y)dy,\\
\l P(x) - \L N(x) &\leq \int_{B_1}(\d u(x;y)-\d u(0;y))K^\s(y)dy \leq \L P(x) - \l N(x).
\end{align*}
By change of variables,
\begin{align*}
\int_{B^c_1} = &\int u(y)\1K^\s(y-x)\chi_{B^c_1}(y-x) - K^\s(y)\chi_{B^c_1}(y)\2dy\\
&{} + (u(x)-u(0))\int_{B^c_1} K^\s(y)dy.
\end{align*}
By Theorem \ref{C1aold}, the last term is of order $|x|$. The first term can be estimated using the smoothness hypothesis of $K$,
\begin{align*}
&\int |K^\s(y-x)\chi_{B^c_1}(y-x) - K^\s(y)\chi_{B^c_1}(y)|dy,\\
\leq &\int_{B^c_{1/2}}|K^\s(y-x) - K^\s(y)|dz \leq C|x|.
\end{align*}
On the other hand we have the estimate $\|(u_x-u)_t\|_\8\leq C|x|^\a$ from Theorem \ref{furthertime}. Therefore,
\begin{align*}
[(u_x-u)_t-L(u_x-u)](0,t) &\geq -C|x|^\a - \L P(x) + \l N(x).
\end{align*}
Taking the infimum over $L\in\cL_2$ and using the equation for $(u_x-u)$ we get
\begin{align*}
0 &\geq -C|x|^\a - \L P(x) + \l N(x)
\end{align*}

A similar computation with $(u_x-u)_t-\cM^-_\cL(u_x-u)\geq 0$ provides the other inequality.
\end{proof}

The next result is a diminish of oscillation lemma. As we have learned from \cite{C1,C2,C3,CD,CD2} it is important to strengthen the hypothesis of being just bounded and allow some growth at infinity. This allows to iterate the lemma taking into account that the tails grow in a controlled way.

By rescaling we can further assume that for $\e_1>0$ sufficiently small (to be fixed) and for every set $K \ss \R^n$
\begin{alignat}{2}
\label{resc0} |w_K| &\leq 1/2 &&\text{ in $C_{1,1}$},\\
\label{resc1} |w_K| &\leq |x|^{1/2} &&\text{ in $B_1^c\times[-1,0]$},\\
\label{resc3} (w_K)_t-\cM^+_\cL w_K &\leq \e_1 &&\text{ in $C_{1,1}$}.
\end{alignat}
Additionally, by the previous lemma, we assume that in $C_{1/2,1}$,
\begin{align}
\label{resc4} \frac{\l}{\L}N(x,y)-\e_1|x|^\a &\leq P(x,t)\leq \frac{\L}{\l}N(x,t)+\e_1|x|^\a.
\end{align}


\begin{lemma}\label{Ca for P}
Assume \eqref{resc0}, \eqref{resc1}, \eqref{resc3} and \eqref{resc4}. There are constants $\kappa,\theta > 0$, sufficiently small, such that in $C_{\kappa,\kappa^\s}$
\begin{align*}
P \leq \frac{1}{2}-\theta.
\end{align*}
\end{lemma}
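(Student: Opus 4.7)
My plan is to argue by contradiction, combining the Point Estimate (Theorem \ref{PE}) with the coupling \eqref{resc4} and the fact that $w_A(0,t)=0$ for every measurable $A\ss B_1$ and every $t$.

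Suppose toward contradiction that $P(x^*,t^*) > 1/2 - \theta$ at some $(x^*,t^*) \in C_{\kappa,\kappa^\s}$. Since $P = \sup_A w_A$, there exists $A^*$ with $w_{A^*}(x^*,t^*) > 1/2 - 2\theta$. The function $v := 1/2 - w_{A^*}$ is nonnegative in $C_{1,1}$ by \eqref{resc0}, has polynomially controlled tail outside $B_1$ by \eqref{resc1}, satisfies the supersolution inequality $v_t - \cM^-_\cL v \geq -\e_1$ by \eqref{resc3}, and obeys $v(x^*,t^*) < 2\theta$. Applying the Point Estimate to $v$ in a cylinder $C_{r,r^\s}$ containing $(x^*,t^*)$ whose doubling sits inside $C_{1,1}$ (with the negative tail of $v$ outside $B_1$ absorbed into a universal right-hand side via \eqref{resc1}), I obtain that $\{v > 1/4\}$ occupies at most a fraction $\mu$ of $C_{r,r^\s}$, with $\mu$ as small as desired once $\theta$ and $\e_1$ are small. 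Consequently $P \geq w_{A^*} \geq 1/4$ on at least a $(1-\mu)$-fraction of $C_{r,r^\s}$, and by \eqref{resc4} (shrinking $\kappa$ so that $\e_1|x|^\a$ is negligible), $N \geq c_0 > 0$ on the same set for some universal $c_0$.

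The final contradiction should be extracted by confronting this density lower bound for $N$ with the identity $N(0,t)=0$ (which holds because $w_A(0,t)=0$ for every $A$). Each $-w_A$ is a nonnegative viscosity supersolution of $u_t - \cM^-_\cL u = -\e_1$ vanishing on $\{x=0\}$; by a measurable selection $A_-(x,t)$ of near-optimizers for $N$ over the large-density set, together with a covering and stacking argument along the supersolution fibers $\{-w_{A_-}\}$, one propagates the positive lower bound of $N$ into a neighborhood of the spatial origin, contradicting $N(0,t)=0$.

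The main obstacle is precisely this last propagation step: $N$ itself is a supremum of supersolutions and therefore not a supersolution, so the weak Harnack inequality is not directly available for $N$. A rigorous implementation of the contradiction will either require a careful pointwise choice of $A_-(x,t)$ combined with a stacking of the Point Estimate across the fibers $\{-w_{A_-}\}$, or a refinement of Lemma \ref{lema91} that transfers a pointwise lower bound on $N$ into a pointwise lower bound on a fixed $-w_{A_-}$, to which the standard weak Harnack inequality then applies.
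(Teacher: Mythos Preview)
Your contradiction setup and the first half of the argument --- choosing $A^*$ so that $w_{A^*}$ nearly attains $P$ at the bad point, applying the Point Estimate to $1/2 - w_{A^*}$ to produce a set of nearly full measure on which $w_{A^*}$ (hence $P$) is close to $1/2$, and then invoking \eqref{resc4} to get $N \geq c_0$ on that same set --- is exactly the route the paper takes. The gap you flag in your last paragraph is real, and the measurable--selection/stacking idea you sketch does not close it: at each point the near--optimizer $A_-(x,t)$ changes, and you never obtain a single function to which weak Harnack or the Oscillation Lemma applies.

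The missing step is purely algebraic. Take the \emph{complementary} set $B := B_1 \setminus A^*$. Then the pointwise identity
\[
w_{A^*} + w_B \;=\; w_{B_1} \;=\; P - N
\]
holds. On the large set $G$ where $w_{A^*} \geq 1/2 - s\theta$ (so $0 \leq P - w_{A^*} \leq s\theta$) and $N \geq c_0$, this forces
\[
w_B \;=\; (P - w_{A^*}) - N \;\leq\; s\theta - c_0 \;<\; -\tfrac{c_0}{2}
\]
for $\theta$ small. But $w_B$ is a \emph{single fixed} function satisfying the subsolution inequality \eqref{resc3}, so the Oscillation Lemma \ref{lem:oscillation} applies directly to $v_B := (w_B + c_0/2)^+$. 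This $v_B$ vanishes on $G$, is globally controlled by \eqref{resc0}--\eqref{resc1}, yet equals $c_0/2$ at the spatial origin because $w_B(0,t) = 0$. After a rescaling (by a further parameter $\eta$) that makes both the tail integral and the mass of $v_B$ outside $G$ small, the Oscillation Lemma bounds $v_B(0,0)$ by something strictly less than $c_0/2$, giving the contradiction. In short, passing to the complement $B = B_1 \setminus A^*$ is precisely what converts your ``$N$ is a supremum of supersolutions'' obstacle into a single subsolution problem.
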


\begin{remark}\label{rescaling_of_osc_lemma}
We should ask ourselves how small should $\kappa$ and $\theta$ in order to be able to iterate the lemma. We need the rescaled $\tilde w_K$, given by
\begin{align*}
\tilde w_K(x,t) = \frac{w_K(\kappa x, \kappa^\s t)}{1-\theta},
\end{align*}
to satisfy the same hypothesis \eqref{resc0}, \eqref{resc1}, \eqref{resc3} and \eqref{resc4}. \eqref{resc0} is immediate, \eqref{resc1} holds if $(1-\theta) - \kappa^{1/2} \geq \theta/2 > 0$ which is reasonable as $\kappa,\theta$ can be chosen even smaller. \eqref{resc3} holds if $(1-\theta) > \kappa^\s$ which was already contained in the previous inequality as $\s > 1/2$. \eqref{resc4} holds if $\kappa^{\s-\a} \leq (1-\theta)$ which is possible because $\s > 1 > \a$.
\end{remark}


\begin{proof}
Assume by contradiction that for some $(x_0,t_0) \in C_{\kappa,\kappa^\s}$, $P(x_0,t_0) > (1/2-\theta)$. There is then some set $A$ such that $w_A(x_0,t_0) > (1/2-\theta)$. The function $v_A$, given by the following truncation,
\begin{align*}
v_A := \1\frac{1}{2} - w_A\2^+,
\end{align*}
satisfies an equation coming from \eqref{resc3}. As usual the truncation introduces an error that can be controlled in the interior
\begin{align*}
(v_A)_t - \cM_{\cL}^-v_A \geq -C \text{ in $C_{1/2,1}$}.
\end{align*}

We use the Point Estimate \ref{PE} to control the distribution of $v_A$ in $C_{\kappa,\kappa^\s}(0,-\kappa^\s)$,
\begin{align}\label{G}
\frac{|\{v_A > s\theta\} \cap C_{\kappa,\kappa^\s}(0,-\kappa^\s)|}{|C_{\kappa,\kappa^\s}(0,-\kappa^\s)|} \leq C(\theta + \kappa^\s)^\e(s\theta)^{-\e}.
\end{align}
By choosing $\kappa^\s\leq\theta$ we will make the right hand side $Cs^{-\e}$ sufficiently small, independently of $\theta$, by taking $s$ sufficiently large. This makes
\begin{align*}
G := \{w_A \geq (1/2 - s\theta)\} \cap C_{\kappa,\kappa^\s}(0,-\kappa^\s)
\end{align*}
to cover a fraction of $C_{\kappa,\kappa^\s}(0,-\kappa^\s)$ close to one.

In $G$, $w_A$ and $P$ are close to $1/2$. By \eqref{resc4}, $N$ can be forced also to be strictly positive in $G$, say larger than $\l/(4\L)$ by making $\e_1 + \theta \leq 1/4$. Also in $G$ and for $B = B_1\sm A$, $w_B$ has to be close to $-N$. This is because $w_A+w_B=P-N$, then 
\begin{align*}
0 \leq N+w_B=P-w_A \leq s\theta.
\end{align*}
This allows us to make $w_B \leq -\l/(8\L)$ in $G$ by choosing $\theta < \l/(8s\L)$.

Now we use the Oscillation Lemma to obtain the contradiction. Consider, for $\eta \in (0,1)$, $v_B$ given by,
\begin{align*}
v_B(x,t) = \1w_B(\kappa\eta x, (\kappa\eta)^\s t-\kappa^\s)+\frac{\l}{8\L}\2^+.
\end{align*}
It still satisfies in $C_{(\kappa\eta)^{-1},(\kappa\eta)^{-\s}}$, 
\begin{align*}
(v_B)_t-\cM^+_\cL(v_B) \leq \e_1(\kappa\eta)^\s \leq \e_1.
\end{align*}
Also, from \eqref{G}, we know that,
\begin{align*}
|\{v_B > 0\} \cap C_{\eta^{-1},\eta^{-\s}}| \leq C\eta^{-(n+\s)}s^{-\e}.
\end{align*}
By the Oscillation Lemma,	
\begin{align*}
\frac{\l}{8\L} = v_B(0,0) &\leq C\1 \e_1 + \eta^{-(n+\s)}s^{-\e} + \sup_{t\in[-\eta^{-\s},0]}\int_{B^c_{\eta^{-1}}} \frac{|v_B(y,t)|}{|y|^{n+\s}}dy\2.
\end{align*}
Changing variables,
\begin{align*}
\int_{B^c_{\eta^{-1}}} \frac{|v_B(y,t)|}{|y|^{n+\s}}dy &= (\kappa\eta)^\s\int_{B^c_\kappa} \frac{\1w_B(y, (\kappa\eta)^\s t-\kappa^\s)+\frac{\l}{8\L}\2^+}{|y|^{n+\s}}dy,\\
&\leq C\eta^\s,
\end{align*}
where the last inequality holds by the bounds \eqref{resc0} and \eqref{resc1}. Putting it back in the estimate we obtain,
\begin{align*}
\frac{\l}{8\L} \leq C\1\e_1 + \eta^{-(n+\s)}s^{-\e} + \eta^\s\2.
\end{align*}
This gives us a contradiction by choosing $\e_1,\eta^\s < \l/(100C\L)$ and then $s^\e > (100C\L)/(\l\eta^{n+\s})$.
\end{proof}

We are now able to prove the parabolic nonlocal Evans-Krylov Theorem.


\begin{theorem}[Classical solutions]
Let $\cL \ss \cL_2$, $u$ be a bounded function in $\R^n\times(-1,0]$ solving
\begin{align*}
u_t-\cM^-_\cL u=0 \text{ in viscosity in $C_{1,1}$},
\end{align*}
Then $(-\D)^\s u$ is H\"older continuous with the following estimate
\begin{align*}
\|(-\D)^\s u\|_{C^\a(C_{1/2,1/2})} \leq C(\|u\|_{L^\8((-1,0] \mapsto L^1(\w_\s))} + [u]_{C^{0,1}((-1,0] \mapsto L^1(\w_\s))}).
\end{align*}
\end{theorem}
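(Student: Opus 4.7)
The plan is to iterate the diminish of oscillation lemma (Lemma \ref{Ca for P}) at dyadic parabolic scales to produce pointwise H\"older decay of $w_A$ at every interior point, uniformly in $A\subseteq B_1$, and then choose $A$ so that $w_A$ captures the fractional Laplacian up to a smooth tail. By translation invariance of the equation and of the class $\cL_2$ in space and time, it is enough to establish the pointwise estimate at $(0,0)$.

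Normalize $u$ so that $\|u\|_{L^\8((-3,0]\mapsto L^1(\w_\s))}+[u]_{C^{0,1}((-3,0]\mapsto L^1(\w_\s))}\leq 1$; Lemma \ref{lem:bound_laplacian} combined with Lemma \ref{lema91} then licenses a further linear rescaling so that \eqref{resc0}--\eqref{resc4} hold. Lemma \ref{Ca for P} yields $P\leq 1/2-\theta$ on $C_{\kappa,\kappa^\s}$. The matching bound $N\leq 1/2-\theta'$ for some $\theta'>0$ is recovered from the comparability \eqref{resc4} applied at sufficiently small scale (the error $\e_1|x|^\a$ there is negligible compared with $\theta$ once $\kappa$ is chosen small enough), so $|w_A|\leq (1-\theta')/2$ on $C_{\kappa,\kappa^\s}$ uniformly in $A$. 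By Remark \ref{rescaling_of_osc_lemma} the rescaled $\tilde w_A(x,t)=w_A(\kappa x,\kappa^\s t)/(1-\theta')$ again satisfies the renormalized hypotheses \eqref{resc0}--\eqref{resc4}, and induction yields
\begin{align*}
|w_A|\leq \tfrac{1}{2}(1-\theta')^k\quad\text{on $C_{\kappa^k,\kappa^{k\s}}$}
\end{align*}
for every $k\geq 0$. Setting $\a=\log(1-\theta')/\log\kappa\in(0,1)$ produces $|w_A(x,t)|\leq C(|x|+|t|^{1/\s})^\a$ uniformly in $A$.

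To convert this into information about $(-\D)^{\s/2}u$, take $A=B_{1/2}$ so that $K_A^\s(y)=(2-\s)|y|^{-(n+\s)}\chi_{B_{1/2}}(y)$. On $B_{1/4}\subseteq\{\varphi=1\}$,
\begin{align*}
w_A(x,t)-w_A(0,t)=\int_{B_{1/2}}\bigl(\d u(x;y)-\d u(0;y)\bigr)K_A^\s(y)\,dy
\end{align*}
agrees with the increment of $(-\D)^{\s/2}u$ modulo the tail over $B_{1/2}^c$. The tail decomposes into $\int_{B_{1/2}^c}(u(x+y)-u(y))(2-\s)|y|^{-(n+\s)}\,dy$, which is Lipschitz in $x$ by smoothness of the kernel off the origin together with $\|u\|_{L^\8\mapsto L^1(\w_\s)}\leq 1$, plus a boundary correction from the $Du\cdot y\chi_{B_1}$ piece controlled by Theorem \ref{C1aold}. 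H\"older regularity in $t$ for the same tail follows from Theorem \ref{furthertime} and $[u]_{C^{0,1}\mapsto L^1(\w_\s)}\leq 1$. Combining this with the H\"older bound on $w_A$ yields the estimate on $(-\D)^{\s/2}u$ at the origin, and translation across $C_{1/2,1/2}$ finishes the proof.

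The hard part is ensuring the induction closes: one must guarantee that the two-sided control $|w_A|\leq (1-\theta')^k/2$ survives rescaling with the same constants. The constraint $\s>\a$ flagged in Remark \ref{rescaling_of_osc_lemma} is precisely what forces the $|x|^\a$ error from \eqref{resc4} to remain subordinate to the geometric improvement $(1-\theta')^k$ at every scale, and preservation of the tail bound \eqref{resc1} under rescaling has to be checked simultaneously; once these closure conditions are verified, the H\"older continuity of $(-\D)^{\s/2}u$ --- and hence the $C^{\s+\a}$ estimate announced in Theorem \ref{thmintro} --- follows.
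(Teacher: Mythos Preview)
Your strategy matches the paper's, but the induction does not close the way you claim. You assert that from the conclusion $P\leq \tfrac12-\theta$ on $C_{\kappa,\kappa^\s}$ one recovers $N\leq \tfrac12-\theta'$ via the comparability \eqref{resc4}. That inequality, however, only gives
\[
N \leq \frac{\L}{\l}\bigl(P+\e_1|x|^\a\bigr) \leq \frac{\L}{\l}\Bigl(\tfrac12-\theta\Bigr)+\frac{\L}{\l}\e_1\kappa^\a,
\]
and since $\L/\l\geq 1$ (strictly, in any genuinely nonlinear situation) this is in general larger than $\tfrac12$, not smaller. So you cannot force the two-sided bound $|w_A|\leq(1-\theta')/2$ at each step from \eqref{resc4}, and your induction on $|w_A|$ breaks down. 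There is no symmetric version of Lemma~\ref{Ca for P} for $N$ either: the functions $w_A$ satisfy only the one-sided inequality \eqref{resc3} (coming from concavity of $\cM^-_\cL$), so $-w_A$ is not a subsolution and the contradiction argument cannot be rerun with the roles of $P$ and $N$ exchanged.

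The correct resolution --- which is what the paper does --- is to observe that the proof of Lemma~\ref{Ca for P} only ever uses the \emph{upper} bound $w_K\leq \tfrac12$ on $C_{1,1}$ and the upper half of \eqref{resc1} (both truncations $(1/2-w_A)^+$ and $(w_B+\l/(8\L))^+$ are controlled by upper bounds on $w_K$). Hence the lemma can be iterated purely on the one-sided hypothesis $P\leq \tfrac12$, yielding geometric decay of $P$ alone. Only \emph{after} the H\"older modulus for $P$ is established does one invoke Lemma~\ref{lema91} a single time to transfer it to $N$: since $N\leq(\L/\l)P+C|x|^\a$ pointwise, H\"older decay of $P$ immediately gives the same (with a larger constant) for $N$. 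Your final step relating $w_{B_{1/2}}$ to $(-\D)^{\s/2}u$ via the smooth tail is fine, but note that bounding $|w_{B_{1/2}}|$ requires both $P$ and $N$ to be small, which is exactly why the $N$ bound cannot be skipped.
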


\begin{proof}
The case $\s\leq 1$ is contained in \cite{CD3}. By the regularization procedure of \cite{CD2} we can assume that $(-\D)^\s u$ is continuous, all we need to show is the estimate at the origin. As usual we re-normalize $u$ in order to have $\|u\|_{L^\8((-1,0] \mapsto L^1(\w_\s))} + [u]_{C^{0,1}((-1,0] \mapsto L^1(\w_\s))} \leq 1$

By the definitions of $P$ and $N$ we have the following identity in $B_{1/8}\times(-1,0]$
\begin{align*}
&(-\D)^\s u(0)-(-\D)^\s u(x) =\\
&C\1 P(x)+N(x)+(2-\s)\int_{B^c_1}\frac{\d u(x;y)-\d u(0;y)}{|y|^{n+\s}}dy\2.
\end{align*}
The third term can be bounded by $C|x|$ as in the proof of Lemma \ref{lema91}.

Lemma \ref{Ca for P} and the Remark \ref{rescaling_of_osc_lemma} gives a geometric decay for $P$ around the origin which implies a H\"older modulus of continuity for it. By Lemma \ref{lema91} this is equivalent to a similar modulus of continuity for $N$. Then, the first two terms above can be bounded by $C|x|^{\a}$, for some universal $\a$, which concludes the proof.
\end{proof}

\end{document}